\theoremstyle{plain} 
\newtheorem{theorem}{Theorem}[section]
\newtheorem{lemma}[theorem]{Lemma}
\newtheorem{corollary}[theorem]{Corollary}
\theoremstyle{definition} 
\newtheorem{definition}[theorem]{Definition}
\newtheorem{remark}[theorem]{Remark}
\providecommand{\keywords}[1]
{
  \noindent \small	
  \textbf{Keywords:} #1
}
\providecommand{\amscode}[1]
{
  \noindent \small	
  \textbf{AMS subject classifications:} #1
}
\newcommand{\bsa}{\boldsymbol{a}}
\newcommand{\bsg}{\boldsymbol{g}}
\newcommand{\bsh}{\boldsymbol{h}}
\newcommand{\bsk}{\boldsymbol{k}}
\newcommand{\bst}{\boldsymbol{t}}
\newcommand{\bsx}{\boldsymbol{x}}
\newcommand{\bsy}{\boldsymbol{y}}
\newcommand{\bsz}{\boldsymbol{z}}
\newcommand{\bszero}{\boldsymbol{0}}
\newcommand{\bsdelta}{\boldsymbol{\delta}}
\newcommand{\NN}{\mathbb{N}}
\newcommand{\RR}{\mathbb{R}}
\newcommand{\XX}{\mathbb{X}}
\newcommand{\ZZ}{\mathbb{Z}}
\newcommand{\Scal}{\mathcal{S}}
\renewcommand{\det}{{\rm det}\,} 
\newcommand{\uu}{\mathfrak{u}}
\providecommand{\keywords}[1]
{
  \noindent \small	
  \textbf{Keywords:} #1
}
\providecommand{\amscode}[1]
{
  \noindent \small	
  \textbf{AMS subject classifications:} #1
}
\title{On the quasi-uniformity properties of quasi-Monte Carlo point sets and sequences -- Part~I: Lattices and Kronecker sequences
\thanks{This work of J.D. is supported by ARC grant DP220101811. The work of T.G. is supported by JSPS KAKENHI Grant Number 23K03210. The work of K.~S. is supported by JSPS KAKENHI Grant Number 24K06857.}}
\author{Josef Dick\thanks{School of Mathematics and Statistics, The University of New South Wales Sydney, 2052 NSW, Australia (\url{josef.dick@unsw.edu.au})},
Takashi Goda\thanks{Graduate School of Engineering, The University of Tokyo, 7-3-1 Hongo, Bunkyo-ku, Tokyo 113-8656, Japan (\url{goda@frcer.t.u-tokyo.ac.jp})}, 
Gerhard Larcher\thanks{Institute for Financial Mathematics and Applied Number Theory, Johannes Kepler University Linz, Altenbergerstra{\ss}e 69, 4040 Linz, Austria (\url{gerhard.larcher@jku.at})},\\ 
Friedrich Pillichshammer\thanks{Institute for Financial Mathematics and Applied Number Theory, Johannes Kepler University Linz, Altenbergerstra{\ss}e 69, 4040 Linz, Austria (\url{friedrich.pillichshammer@jku.at})}, 
Kosuke Suzuki\thanks{Faculty of Science, Yamagata University, 1-4-12 Kojirakawa-machi, Yamagata, 990-8560, Japan (\url{kosuke-suzuki@sci.kj.yamagata-u.ac.jp})}}
\date{\today}
\begin{document}

\maketitle

\keywords{quasi-uniformity, low-discrepancy, quasi-Monte Carlo, lattice point sets, $(n \boldsymbol{\alpha})$-sequences}
\amscode{11K36, 11K38, 11J71}

\begin{abstract}
The discrepancy of a point set quantifies how well the points are distributed, with low-discrepancy point sets demonstrating exceptional uniform distribution properties. Such sets are integral to quasi-Monte Carlo methods, which approximate integrals over the unit cube for integrands of bounded variation. In contrast, quasi-uniform point sets are characterized by optimal separation and covering radii, making them well-suited for applications such as radial basis function approximation. This paper explores the quasi-uniformity properties of quasi-Monte Carlo point sets constructed from lattices and also Kronecker sequences. Specifically, we analyze rank-1 lattice point sets, Fibonacci lattice point sets, Frolov point sets, and Kronecker sequences (also referred to as $(n \boldsymbol{\alpha})$-sequences), providing insights into their potential for use in applications that require both low-discrepancy and quasi-uniform distribution. As an example, we show that the $(n \boldsymbol{\alpha})$-sequence with $\alpha_j = 2^{j/(d+1)}$ for $j \in \{1, 2, \ldots, d\}$ is quasi-uniform and has low-discrepancy. The quasi-uniformity properties of quasi-Monte Carlo digital nets and sequences will be studied in a companion paper.
\end{abstract}


\sloppy

\section{Introduction}
In this paper, together with the companion paper \cite{DGSxx}, we study two properties of point sets and sequences that frequently arise in numerical analysis:
quasi-uniformity on the one hand, and uniform distribution modulo~$1$ (for sequences) as well as small star-discrepancy (for finite point sets) on the other hand.
Our main goal is to understand to what extent these two requirements can be achieved simultaneously within structured QMC constructions.

Quasi-uniformity is a geometric space-filling condition.
It requires that the largest ``holes'' are small (small covering radius) while points do not cluster excessively (large separation radius).
For $N$ points in dimension $d$, the optimal regime is that both radii scale like $N^{-1/d}$, and a convenient quantitative proxy is the mesh ratio, i.e., the quotient of covering radius and separation radius.
In contrast, uniform distribution modulo~$1$ and discrepancy quantify distributional uniformity and, in general, need not control geometric separation; see Section~\ref{ssec:quvsud}.

This issue is particularly relevant when the same nodes are used both for QMC-type sampling and for geometric approximation tasks; we discuss this motivation in Section~\ref{ssec:motiv}.
From the viewpoint of QMC constructions, however, the relationship between low-discrepancy (or uniform distribution modulo~$1$) and quasi-uniformity has so far not been studied in a unified manner, and the available results are scattered across different settings.
Accordingly, we investigate to what extent quasi-uniformity (in the form of bounded mesh ratio) can be combined with uniform distribution and small star-discrepancy within structured QMC constructions.

The present paper (Part~I) focuses on lattice-type constructions.
Our main contribution is to provide usable algebraic conditions ensuring bounded mesh ratio for several lattice-based node families, and to combine these conditions with discrepancy bounds in order to obtain explicit examples where bounded mesh ratio and small star-discrepancy hold simultaneously.
This covers, in particular, suitably scaled and shifted admissible (Frolov-type) lattices, classical two-dimensional lattice point sets, and Kronecker sequences, for which we also characterize the quasi-uniformity behavior.
Moreover, for rank-1 lattice point sets in arbitrary dimension we prove an existence result, showing that there exist generating vectors for which both bounded mesh ratio and small star-discrepancy are attained.

Related questions for digital nets and digital sequences are addressed separately in a companion work (Part~II), see \cite{DGSxx}.

We now give the precise definitions of the two concepts under consideration.

\subsection{Quasi-Uniformity}

Let $\Omega \subseteq \mathbb{R}^d$ be a compact subset with $\mathrm{vol}(\Omega) > 0$. Let $P$ be a point set in $\Omega$. For $p \in [1,\infty]$ let $\|\cdot \|_p$ denote the $\ell_p$ norm, i.e., $\|\bsx\|_p=(|x_1|^p+\cdots+|x_d|^p)^{1/p}$ for $\bsx=(x_1,\ldots,x_d) \in \mathbb{R}^d$, with the obvious modifications for $p = \infty$.

Define the {\it covering radius} (in $\ell_p$ norm) by
\[ h_p(P; \Omega):=\sup_{\bsx\in \Omega}\min_{\bsy\in P} \|\bsx-\bsy\|_p. \]
The covering radius can be described in the following way: If one places closed $\ell_p$ balls of radius $r$ around each point of $P$, then the smallest value of $r$ such that the union of the balls cover $\Omega$ is called the covering radius. In the literature, the covering radius is also known as {\it dispersion} (see, e.g., \cite[Definition~6.2]{N92}), as {\it fill distance} (see, e.g., \cite{H21}), as {\it mesh norm}, or as {\it minimax-distance criterion} (see \cite{PZ23}).

The {\it separation radius} (in $\ell_p$ norm) is given by
\[ q_p(P; \Omega):=\min_{\substack{\bsx,\bsy\in P\\ \bsx\neq \bsy}} \frac{\|\bsx-\bsy\|_p}{2}.\]
The separation radius can be described in the following way: If one places open $\ell_p$ balls of radius $r$ around each point of $P$, then the separation radius is the largest value of $r$ such that none of these balls intersect in $\Omega$. In the literature, the separation radius is also known as {\it packing radius} or {\it maximin-distance criterion}; see \cite{PZ23}.

The {\it mesh ratio} (in $\ell_p$ norm) is given by
\begin{equation*}
\rho_p(P; \Omega) = \frac{h_p(P; \Omega)}{q_p(P; \Omega)}.
\end{equation*}

Since we are mainly dealing with the domain $[0,1]^d$ we write $h_p(P), q_p(P), \rho_p(P)$ instead of $h_p(P; [0,1]^d), q_p(P; [0,1]^d), \rho_p(P; [0,1]^d)$ for brevity.

It is obvious that $\rho_p(P;\Omega)\ge 1$, at least if $\Omega$ is connected. Furthermore, it is known that the optimal order of the separation radius and the covering radius is $N^{-1/d}$. In more detail, we have $$h_p(P,\Omega) \ge \frac{c_{p,d}}{N^{1/d}} \quad \mbox{ and } \quad q_p(P,\Omega) \le \frac{c_{p,d}'}{N^{1/d}},$$ for quantities $c_{p,d},c_{p,d}'>0$, where the lower bound on the separation radius is taken from \cite[Lemma~2.1]{PZ23} and the upper bound on the covering radius follows by comparing the volume of the balls around each point in $P$ to the volume of $\Omega$.
An explicit bound for $\Omega = [0,1]^d$ is given in Appendix~\ref{sec:unitcube_bound}.
Hence the mesh ratio of a family of point sets with an increasing number of points is bounded by a constant independent of the number of points if and only if both the covering radii and the separation radii are of the optimal order. 
This motivates the notion of quasi-uniformity, i.e., bounded mesh ratio.
In the literature, quasi-uniformity is defined in two closely related ways.
Some works formulate it for a \emph{set/sequence of data sets} $(P_i)_{i\ge1}$ with increasing resolution, which does not need to be extensible.
Other works define it for an infinite \emph{sequence of points} $(x_n)_{n\ge1}$ by imposing the same property on the initial segments $P_N=\{x_1,\dots,x_N\}$.

In this paper, we state both formulations explicitly and further define the notion of ``quasi-uniform along a subsequence'' as below.
This notion is motivated by the way QMC point sets are used in practice.
Some standard QMC sequences, such as Sobol' sequences, are naturally organized in dyadic sample sizes $N=2^m$, and one often evaluates or refines approximations only at these values of $N$.
More generally, even if a full sequence does not exhibit uniformly good geometric regularity for every $N$, it is natural to focus on those subsequence lengths for which the mesh ratio is well controlled.

\begin{definition}\label{def_qu_pointsets}
    Let $X$ be a set of point sets in $\Omega$ with $\sup_{P \in X} |P| = \infty$.
    Then we call $X$ \emph{a quasi-uniform family} (of point sets) if there exists a constant $C_{p} >0$ such that the mesh ratio is bounded, i.e.
    \[ \rho_p(P) \leq C_{p}, \quad \forall P  \in X.\]
\end{definition}

\begin{definition}
Let $\Scal = (\bsx_n)_{n \ge 0}$ be an infinite sequence of points. We denote the set of the first $i$ points of $\Scal$ by $P_i =\{\bsx_0, \bsx_1, \ldots, \bsx_{i-1}\}$. Then, we call $\Scal$ a \emph{quasi-uniform sequence} if the set $\{P_i\}_{i \in \NN}$ is a quasi-uniform family, i.e., there exists a constant $C'_p > 0$ such that \[ \rho_p(P_i) \le C'_p, \quad \forall i \in \NN\setminus \{1\}.\]
We call the sequence $\Scal$ \emph{quasi-uniform along the subsequence} $1 \le i_1 < i_2 < \cdots$ if the mesh ratio is bounded for all $i_k$, i.e., $\rho_p(P_{i_k}) \le C'_p$ fo all $k \in \NN$.
\end{definition}

In fact, the constant can be chosen independently of the dimension~$d$. It is shown in \cite{PZ23} that there exists a quasi-uniform sequence with $C'_p = 2$, and that this constant is the best possible; no sequence can satisfy $\rho_p(P_i)<2$ for all $i$.
In contrast, in our framework the constants typically depend on~$d$, and the resulting bounds are mainly effective in moderate dimensions. We leave a systematic investigation of the dimension dependence of the mesh ratio for QMC point sets to future work.

Since $\bsx, \bsy \in [0,1]^d$ are finite dimensional vectors, all norms $\| \cdot \|_p$ are equivalent, therefore, if a family of point sets is quasi-uniform for one $p$, it is quasi-uniform for all $p \in [1,\infty]$.

In the following lemma we show that quasi-uniformity along a subsequence whose indices grow at most geometrically upgrades to quasi-uniformity of the full sequence, and vice versa.
The proofs are given in Appendix~\ref{sec:app_proof1} and \ref{sec:app_proof2}.

\begin{lemma}\label{lem_seq_weak}
Let $\Scal = \{\bsx_0, \bsx_1, \ldots \} \subseteq [0,1]^d$ be a sequence. Let $P_i = \{ \bsx_0, \bsx_1, \ldots, \bsx_{i-1}\}$ denote the first $i$ points of $\Scal$. Let $1 \le i_1 < i_2 < i_3 < \cdots$ be an increasing sequence at most geometrically, i.e., there is a constant $c > 1$ such that $i_{k+1} \le c i_{k}$ for all $k \in \mathbb{N}$.

\begin{enumerate}
\item 
Assume that $\Scal$ is a quasi-uniform sequence along the subsequence $i_1 < i_2 < \cdots$, i.e., the mesh ratio $\rho_p(P_{i_k}) \le C''_p$  for all $k \in \mathbb{N}$.
Then $\Scal$ is a quasi-uniform sequence.

\item Assume that the covering radius $h_p(P_{i_k}) \le C_p i_k^{-1/d}$ for all $k \in \mathbb{N}$ for some constant $C_p > 0$. 
Then there is a constant $D_p$ such that $h_p(P_i) \le D_p i^{-1/d}$ for all $i \in \mathbb{N}$.

\item Assume that the separation radius $q_p(P_{i_k}) \ge C'_p i_k^{-1/d}$ for all $k \in \mathbb{N}$ for some constant $C'_p > 0$.
Then there is a constant $D'_p$ such that $q_p(P_i) \ge D'_p i^{-1/d}$ for all $i \in \mathbb{N}$.
\end{enumerate}
\end{lemma}

\begin{lemma}\label{lem_seq_weak_inv}
Let $1 \le i_1 < i_2 < i_3 < \cdots$ be an increasing sequence with $\sup_k i_{k+1}/i_k = \infty$.
Then there exists a sequence of points $\Scal$ which is quasi-uniform along the subsequence $1 \le i_1 < i_2 < \cdots$
but not a quasi-uniform sequence.
\end{lemma}

\medskip \textbf{Brief summary of known results.} Quasi-uniform point sets play an important role for instance in the areas of design of computer experiments \cite{FLS06, PM12, SWN03} and radial basis function approximation \cite{Sch95,W05}.

The existence of quasi-uniform infinite sequences was shown in \cite{PZ23}, which proves that such sequences can be constructed by a greedy packing algorithm. It is also shown that there are sequences such that the mesh ratio of the first $N$ points of the sequence is at most $2$ for all $N \ge 2$.
The resulting sequences do not necessarily have low discrepancy.
The case of $d=1$ is an exception, where the resulting sequences, depending on the initialization of the greedy packing, coincide with the van der Corput sequence in base $2$, along with the additional point at $x=1$.
A numerical illustration will be given in Part II \cite{DGSxx}.
Another concrete explicit example in dimension~1 is the Kronecker sequence $(\{n \alpha\})_{n \ge 0}$, which is quasi-uniform if and only if $\alpha$ is a badly approximable number; see \cite{G24b} and Section~\ref{secc:nalpha} of the present paper. 

On the other hand, important sequences that seem at first sight to be candidates for quasi-uniform sequences do actually not possess this property. An example for such an instance is the Sobol' sequence in dimension 2.
In \cite{SS07}, it was predicted based on numerical experiments that the Sobol' sequence satisfies a lower bound on the separation radius of order $N^{-1/d}$ for any dimension $d$.
This was shown not to hold for dimension $d=2$ in \cite{G24a},
and thus the two-dimensional Sobol' sequence is not quasi-uniform.
More strongly, it is not quasi-uniform along any subsequence \cite{suzuki2025a}.
However, the case for $d \ge 3$ remains open.

\subsection{Uniform distribution modulo 1 and discrepancy}\label{subsec:ud1}

In 1916 Weyl~\cite{Weyl16} introduced the concept of uniform distribution modulo 1 of sequences. Accordingly, an infinite sequence $(\bsx_n)_{n \ge 0}$ in $\mathbb{R}^d$ is called {\it uniformly distributed modulo 1}, if for every interval $[\boldsymbol{a},\boldsymbol{b}) \subseteq [0,1)^d$ we have $$\lim_{N \rightarrow \infty} \frac{|\{n < N\ : \ \{\bsx_n\} \in [\boldsymbol{a},\boldsymbol{b})\}|}{N}={\rm Volume}([\boldsymbol{a},\boldsymbol{b})),$$ where $\{x\}=x-\lfloor x \rfloor$ is $x$ taken modulo $1$, which is applied component-wise if the argument is a vector, i.e., $\{\bsx\}=(\{x_1\},\ldots,\{x_d\})$ for $\bsx=(x_1,\ldots,x_d) \in \mathbb{R}^d$. Since we take all numbers modulo $1$ all points can be understood to lie in $[0,1)^d$. Hence we consider point sets in $[0,1)^d$ in the following directly, thus we leave out modulo $1$ and simply refer to uniform distribution. For $\bsx \in [0,1)^d$ we also do not need $\{ \cdot \}$. For more information on uniform distribution, which builds the fundamental basis of quasi-Monte Carlo theory, we refer to \cite{DP10,DT97,KN74,LePi14}.

A quantitative measure for how close the empirical distribution of a given point set in $[0,1)^d$ is to the uniform distribution is the notion of discrepancy. For a class $\mathcal{C}$ of measurable test sets in $[0,1)^d$ the discrepancy (with respect to $\mathcal{C}$) of an $N$-point set $P_N=\{\bsx_0,\bsx_1,\ldots,\bsx_{N-1}\}$ in $[0,1)^d$ is defined as $$D^{\mathcal{C}}_N(P_N)=\sup_{C \in \mathcal{C}} \left|\frac{|\{0 \le n < N \ : \ \bsx_n \in C\}|}{N}-{\rm Volume}(C) \right|.$$ For an infinite sequence $\Scal=(\bsx_n)_{n \ge 1}$ the discrepancy $D_N^{\mathcal{C}}(\Scal)$ is defined as the discrepancy $D_N^{\mathcal{C}}(P_N)$ of the initial segment $P_N=\{\bsx_0,\bsx_1,\ldots,\bsx_{N-1}\}$ of $\Scal$. 

Popular notions of discrepancy are the so-called {\it star-discrepancy} $D_N^*$, which refers to the class of all intervals of the form $[\bszero,\bsa)$ with $\bsa \in [0,1]^d$ as test sets, or the {\it isotropic discrepancy} $J_N$, where the class of test sets is given by all convex subsets $C$ of $[0,1)^d$. Those two notions are related in the following way: For every dimension $d \in \NN$ there exists a quantity $C_d>0$ such that for every $N$-point set $P_N$ in $[0,1)^d$ we have $$D_N^*(P_N) \le J_N(P_N) \le C_d (D_N^{\ast}(P_N))^{1/d},$$ see, e.g., \cite[Chapter~2, Section~1, Theorem~1.6]{KN74}. Furthermore, it is well-known that a sequence $\Scal$ in $[0,1)^d$ is uniformly distributed if and only if its star-discrepancy $D_N^{\ast}(\Scal)$ tends to 0 for $N$ tending to infinity; see, e.g., \cite[Theorem~2.15]{LePi14}. The same holds true with the star-discrepancy replaced by isotropic discrepancy.

With some abuse of nomenclature, we call a sequence $(P_{N_i})_{i \ge 1}$ of $N_i$-element point sets $P_{N_i}$ in $[0,1)^d$ with $N_1 < N_2 < N_3 < \dots$ uniformly distributed modulo 1, if $$\lim_{i \rightarrow \infty} D_{N_i}^*(P_{N_i})=0.$$

\medskip \textbf{Brief summary of known results.} The following asymptotic results about discrepancies are known: 
\begin{itemize}[leftmargin = *]
\item Star-discrepancy: For every $N$-point set $P_N$ in $[0,1)$ we have $D_N^\ast (P_N) \ge 1/(2N)$. For every $d\in \NN\setminus\{1\}$ there exist quantities $c_d>0$ and $\delta_d>0$ such that for every $N$-point set $P_N$ in $[0,1)^d$ we have $$D_N^{\ast}(P_N) \ge c_d \frac{(\log N)^{\frac{d-1}{2}+\delta_d}}{N},$$ where $\delta_2=1/2$ and $\lim_{d \rightarrow \infty} \delta_d=0$. This result, which is an important improvement of Roth's lower bound \cite{roth1}, was shown in \cite{blv08}. The result for $d=2$ is according to \cite{schm72}. On the other hand, for every $N \in \NN \setminus\{1\}$ there exists an $N$-point set $P_N$ in $[0,1)^d$, such that $$D_N^{\ast}(P_N) \ll_d \frac{(\log N)^{d-1}}{N}.$$ Examples are constructions according to Hammersley, Faure, Niederreiter, and others. See, e.g., \cite{DP10} for an overview. It is still an open question whether the order of magnitude $(\log N)^{d-1}/N$ is best possible for the star-discrepancy of $N$-element point sets in $[0,1)^d$.

It has become common to speak about ``low-discrepancy'' sequences or point sets if the corresponding star-discrepancy is of order of magnitude $(\log N)^{\kappa(d)}/N$ for a ``small'' (linearly in $d$) exponent $\kappa(d)$ or, slightly weaker, of order of magnitude $N^{-1+\delta}$ for every $\delta>0$. Low-discrepancy sequences and point sets are of utmost importance as underlying nodes of quasi-Monte Carlo integration rules; see, e.g., \cite{DP10,DT97,KN74,LePi14,N92}.

\item Isotropic discrepancy: In \cite[Theorem~1]{schm75} (see also \cite[Theorem~13A]{schm77}) Schmidt proved the following general lower bound for arbitrary point sets: For every dimension $d$ there exists a quantity $c_d>0$ such that for all $N$-element point sets $P_N$ in dimension $d$ we have $$J_N(P_N) \ge \frac{c_d}{N^{2/(d+1)}}.$$ This result is essentially (up to $\log$-factors) best possible as shown in \cite{beck} for $d=2$ and in \cite{stute} for $d\ge 3$ using probabilistic methods.

For $N$-element lattice point sets $P_N$ in $[0,1)^d$ (which are the main focus of this work -- for a formal definition see Section~\ref{sec:lat}) we always have $$J_N(P_N) \ge \frac{c_d}{N^{1/d}},$$ for some $c_d>0$ depending only on $d$, as shown in \cite{PS20}.

Upper bounds on the isotropic discrepancy of special point sets are given in \cite{lar86,L88}. For example, $J_N(P_N) \ll_d N^{-1/d}$ if $P_N$ is the $N$-element Hammersley net or the $N$-th initial segment of the Halton sequence in dimension $d$.
\end{itemize}

\subsection{Quasi-uniformity versus uniform distribution}\label{ssec:quvsud}

We now present examples that demonstrate the variety of scenarios that can arise concerning the quasi-uniformity and discrepancy of a point set. Here and throughout the paper, we use the following notation: for functions $f,g:\NN \rightarrow \RR$ we write $f(N) \ll g(N)$ if there exists a quantity $C > 0$ independent of $N$ such that $f(N) \le C \ g(N)$ for all $N \in \NN$. We stress that $f(N) \ll g(N)$ has the same meaning as the for some more familiar notation $f(N)=O(g(N))$. If we want to stress that $C$ depends on some parameters, say $d,p$, then this is indicated by writing $f(N) \ll_{d,p} g(N)$. If we have simultaneously $f(N) \ll g(N)$ and $g(N) \ll f(N)$ (of course with different involved quantities $C$), then we write $f(N) \asymp g(N)$ or, if a dependence on other parameters, say $d,p$, should be stressed, $f(N) \asymp_{d,p} g(N)$. We also stress that $f(N) \asymp g(N)$ has the same meaning as $f(N)=\Theta(g(N))$.

\medskip \textbf{Quasi-uniformity does not imply uniform distribution.} Although the name quasi-uniform seems to suggest that quasi-uniformity is related to uniform distribution, it is well understood that this is actually not the case in general. For example, already in dimension $d=1$ the point sets 
\begin{equation*}
\left\{0, \frac{1}{2N}, \ldots, \frac{N-1}{2N}\right\} \cup \left\{\frac{1}{2}, \frac{1}{2}+\frac{1}{4N}, \ldots, \frac{1}{2}+\frac{2N}{4N}= 1\right\} 
\end{equation*}
for $N \in \NN$, are a quasi-uniform family, but are not uniformly distributed, since the star-discrepancy of this point set does not converge to $0$ as $N \to \infty$.

\medskip \textbf{A sequence of uniformly distributed lattice point sets which are not quasi-uniform.} Let $d, m \in \mathbb{N}$, $d > 1$ and let $n_1 = \cdots = n_{d-1} = m$ and $n_d = m^2$. Let $\boldsymbol{n} = \boldsymbol{n}(m) = (n_1, n_2, \ldots, n_d) $ and
\begin{equation*}
\Gamma_{\boldsymbol{n}} = \left\{ \left(\frac{k_1}{n_1}, \ldots, \frac{k_d}{n_d} \right): 0 \le k_j < n_j,\ j= 1, 2, \ldots, d  \right\},
\end{equation*}
be the uniform grid with mesh sizes $n_1,\ldots,n_d$. The total number of points $N = |\Gamma_{\boldsymbol{n}} |= \prod_{j=1}^d n_j= m^{d+1}$. Consider a box $B_{\bst} = \prod_{j=1}^d [0, t_j)$ for some $\bst = (t_1, \ldots, t_d) \in (0,1]^d$. Let $k_j \in \mathbb{N}$ be the unique integer such that $k_j/n_j < t_j \le (k_j+1)/n_j$, i.e., $k_j= \lceil t_j n_j \rceil - 1$. Then the number of points from $\Gamma_{\boldsymbol{n}}$ that belong to $B_{\bst}$ is $k_1 k_2 \cdots k_d$ and
\begin{equation*}
\lim_{m\to \infty} \frac{|B_{\bst} \cap \Gamma_{\boldsymbol{n}}|}{N} = \lim_{m\to \infty} \prod_{j=1}^d \frac{\lceil t_j n_j \rceil -1}{n_j} = \prod_{j=1}^d t_j = \mathrm{Volume}(B_{\bst}).
\end{equation*}
Hence, the sequence of point sets  $(\Gamma_{\boldsymbol{n}(m)})_{m \ge 1}$ is uniformly distributed.

On the other hand, the separation radius is 
\begin{equation*}
q_{p}(\Gamma_{\boldsymbol{n}(m)}) = \frac{1}{2 n_d} = \frac{1}{2 m^2} = \frac{1}{2 N^{\frac{2}{d+1}}} = \frac{1}{2 N^{\frac{1}{d}+\frac{d-1}{d(d+1)}}}.
\end{equation*}
Since the separation is not of the optimal order (which would be $N^{-1/d}$), it follows that the family $(\Gamma_{\boldsymbol{n}(m)})_{m \ge 1}$ is not quasi-uniform.

\medskip \textbf{A quasi-uniform family of lattice point sets is always uniformly distributed.} 
Lattice point sets are the major object of interest in this paper. The formal definition will be presented in Section 2. For the time being the following suffices: For $i \in \mathbb{N}$ let $T_i \in \mathbb{R}^{d \times d}$ and
\begin{equation*}
\XX_i = T_i(\ZZ^d)=\{ T_i \bsk \ : \ \bsk\in \ZZ^d \}
\end{equation*}
and $P_{N_i} = \XX_i \cap [0,1)^d$ with $N_i = |P_i|$ and $N_1 < N_2 < \dots$. Assume that the mesh ratio $\rho_{p}(P_{N_i}) \le C < \infty$ for some constant $C$ independent of $N_i$. Then, necessarily, $h_p(P_{N_i}) \asymp N_i^{-1/d}$.
From Lemma~\ref{le:equiv} (see Section~\ref{sec:lat} below) we know that for lattice point sets in $[0,1)^d$ the separation radius is of the same order of magnitude as the isotropic discrepancy. Hence we have 
\begin{equation}\label{disp_sep}
D_{N_i}^*(P_{N_i}) \le J_{N_i}(P_{N_i}) \asymp h_p(P_{N_i}) \asymp \frac{1}{N_i^{1/d}}.
\end{equation}
This implies that $(P_{N_i})_{i \ge 1}$ is uniformly distributed modulo 1.

\medskip \textbf{A quasi-uniform family of lattice point sets with slowly converging discrepancy.} Consider the regular grid $\Lambda_{m} = \{ (k_1, k_2, \ldots, k_d) / m \ : \ 0 \le k_j < m,\ j= 1, 2, \ldots, d\}$, for $m \in \mathbb{N}$. Then $\{\Lambda_m\}_{m \in \mathbb{N}}$ is a quasi-uniform family, but the star-discrepancy of this point set is only of order \[D_N^{\ast}(\Lambda_m) \asymp \frac{1}{m} = \frac{1}{N^{1/d}},\] where the number of points $|\Lambda_m| = N= m^d$; see, for instance, \cite[Remark~3.33]{DP10}.

\subsection{Motivation and Applications}\label{ssec:motiv}

The mesh-ratio and the related quantities covering radius and separation radius have important practical applications in a number of areas. One such area is experimental designs, including those for computer experiments \cite{FLS06,SWN03}. A central problem in this field is the construction of minimax designs and maximin designs, which aim to minimize the covering radius and to maximize the separation radius, respectively, to ensure the space-filling property; see, e.g., \cite{H21,HRDH11,JMY90,J16,P17,PM12}. 

Another important area is scattered data approximation \cite{SW06,W05}. Recent theoretical advances---particularly in radial basis function approximation and Gaussian process regression---have shown that quasi-uniform point sets (which control both the covering and separation radii) are essential to balance numerical stability with approximation accuracy; see \cite{Sch95,T20,WSH21,WBG21} among others. Typically, for kernel interpolation with standard (isotropic) Sobolev kernels, the theoretical error bound for functions in the native space is primarily determined by the covering radius (the objective of minimax designs). Note that for such isotropic kernels, the convergence rate deteriorates as the dimension $d$ gets larger. However, relying solely on the covering radius is often insufficient. First, a small separation radius leads to an ill-conditioned interpolation matrix (often referred to as the Gram matrix). In practice, this ill-conditioning amplifies numerical round-off errors, which can spoil the theoretical convergence rate and result in a large realized error. Second, in the misspecified case where the target function lies outside the native space, the approximation error is governed by the Lebesgue constant. It is well known that the growth of the Lebesgue constant is sensitive to the point distribution, and a bounded mesh ratio is often required to ensure robust convergence properties \cite{DS10,NWW05,T20,WBG21}. Therefore, simultaneously controlling the covering and separation radii is vital for robust scattered data approximation.

In the context of quasi-Monte Carlo methods, Sobol' and Shukhman~\cite{SS07} studied the minimum distance (separation radius) for the 2-dimensional Sobol' sequence. The first papers on maximized minimum distance in the context of quasi-Monte Carlo rules are \cite{GHSK08} and \cite{GK09}, who focused on $2$ dimensional digital nets with maximized minimum distance for computer graphics applications. The dispersion (covering radius) of quasi-Monte Carlo point sets and applications thereof (quasi-Monte Carlo methods for optimization) is discussed in \cite[Chapter~6]{N92}. 

On the other hand, the need for uniformly distributed point sets and sequences with low discrepancy is well enough known and has its legitimation in the theory of quasi-Monte Carlo methods; see \cite{DKP22,DP10,LePi14,N92}. 

In Section~\ref{ssec:quvsud} we discussed that the two properties, quasi-uniformity and low discrepancy, are not necessarily equivalent. Nevertheless, there may be situations in applications where the presence of both properties is advantageous. We describe one such situation in the following.

Let $K: [0,1]^d \times [0,1]^d \to\mathbb{R}$ be a continuous reproducing kernel and let $\mathcal{H}_K$ be the corresponding reproducing kernel Hilbert space (see \cite{Aron50} for information about the theory of reproducing kernel Hilbert spaces). Let $f \in \mathcal{H}_K$ and consider the problem of approximating the function $f$ given only function values. One method of approximating $f$ is to use kernel interpolation, that is, for a given point set $P_N = \{ \boldsymbol{x}_0, \boldsymbol{x}_1, \ldots, \boldsymbol{x}_{N-1} \} \subseteq [0,1]^d$, we use
\begin{equation*}
f(\boldsymbol{x}) \approx \mathcal{A}(f)(\boldsymbol{x}) := \sum_{n=0}^{N-1} c_n K(\boldsymbol{x}, \boldsymbol{x}_n)
\end{equation*}
with coefficients $c_0,c_1,\ldots,c_{N-1}$ that are determined in the following way: By demanding that $f(\boldsymbol{x}_m) = \mathcal{A}(f)(\boldsymbol{x}_m)$ for all $m \in \{0, 1, \ldots, N-1\}$, we obtain the linear system
\begin{equation*}
f(\boldsymbol{x}_m) = \sum_{n=0}^{N-1} c_n K(\boldsymbol{x}_m, \boldsymbol{x}_n)
\end{equation*}
in the unknowns $c_0, c_1, \ldots, c_{N-1}$. We can write this in matrix form as
\begin{equation}\label{linsysK}
\boldsymbol{f} = \mathcal{K} \boldsymbol{c},
\end{equation}
where $\boldsymbol{f} = (f(\boldsymbol{x}_0), f(\boldsymbol{x}_1), \ldots, f(\boldsymbol{x}_{N-1}))^\top$, $\mathcal{K} = [K(\boldsymbol{x}_n, \boldsymbol{x}_m)]_{n,m=0}^{N-1}$, and $\boldsymbol{c} = (c_0, c_1,\ldots, c_{N-1})^\top$.

In order to obtain a small approximation error, one approach is to use a point set $P_N$ as nodes of the involved integration rule for which the numerical integration error in some function space is small (see \cite[Section~3.2.1]{ZKH09}). Depending on the function space, the integration error can often be related to the discrepancy of the underlying point set. Further, it is known from \cite[Chapter~6]{N92} that many quasi-Monte Carlo point sets with small discrepancy have a covering radius which is asymptotically optimal.

However, such asymptotic optimality of the covering radius does not preclude the existence of point pairs with an arbitrarily small distance. This lack of separation becomes critical in the context of approximation. Consider the case where the point set $P_N$ has two points $\boldsymbol{x}_i, \boldsymbol{x}_j$, which are very close to each other. Then the corresponding rows of the matrix $\mathcal{K}$ are almost the same and the linear system \eqref{linsysK} is ill-conditioned. Hence, in this situation, it is advantageous to use point sets that are not only low in discrepancy (or have small worst-case integration error) but are also quasi-uniform. Moreover, such quasi-uniformity ensures numerical stability and robust convergence for scattered data approximation with standard Sobolev kernels, not only within the native space but also in the misspecified setting.

\subsection{Organization of the paper}

The primal objects of interest in this work are lattice point sets and, closely related, Kronecker sequences (also referred to as $(n \boldsymbol{\alpha})$-sequences). In Section~\ref{sec:lat}, we recall the basic definition of lattices and lattice point sets and provide some tools and lemmas regarding lattices and mesh ratio. 

The main results of this work will be presented in Section~\ref{sec:main}. In Section~\ref{sec:shsradlat}, Theorem~\ref{thm:admislat}, we present a construction of quasi-uniform nested shrunk and shifted lattices of low star-discrepancy by means of admissible lattices. Section~\ref{sec:r1lat} deals with rank-1 lattice point sets. In Corollary~\ref{cor:meshratio-rank1lattice} we present an explicit construction of a rank-1 lattice point set in dimension two with bounded mesh ratio and low star-discrepancy. In Theorem~\ref{thm:exlatd} we consider the $d$-dimensional case and provide an existence result for rank-1 lattice point sets with bounded mesh ratio and low star-discrepancy. Finally, in Section~\ref{secc:nalpha} we consider $(n \boldsymbol{\alpha})$-sequences and characterize exactly those $\boldsymbol{\alpha} \in \RR^d$ for which the corresponding sequence is quasi-uniform; see Theorem~\ref{thm:nalpha}. In a last step, we show that some of them even have very low star-discrepancy.

\medskip

\section{Background on lattices}\label{sec:lat}

In this work we put the main focus on lattice point sets and on $(n \boldsymbol{\alpha})$-sequences. 

\begin{definition}[Lattices]
Let $T\in \RR^{d\times d}$ be an invertible matrix. This matrix generates a {\it lattice} in $\RR^d$ given by
\[ \XX=T(\ZZ^d)=\{ T\bsk \ : \ \bsk\in \ZZ^d\}. \]
For later use we introduce also the {\it dual lattice} of $\XX$ which is $$\XX^{\bot}=\{ \bsh \in \RR^d \ : \ \bsh \cdot \bsy \in \ZZ \mbox{ for all } \bsy \in \XX\},$$ where $\cdot$ denotes the standard inner product on the $\RR^d$.
\end{definition}

The separation radius of a lattice $\XX$ over $\RR^d$ satisfies \[ q_{p}(\XX; \RR^d)
=\min_{\substack{\bsx,\bsy\in \XX \\ \bsx\neq \bsy}}\dfrac{\|\bsx-\bsy\|_p}{2}
= \min_{\bsx \in \XX \setminus \{\bszero\}} \dfrac{\|\bsx\|_p}{2}
\]
and we define the covering radius by $h_p(\XX; \RR^d) = \sup_{\bsx \in \RR^d} \min_{\bsy \in \XX} \|\bsx-\bsy\|_p$.

Point sets with a lattice structure are often used as quadrature nodes in quasi-Monte Carlo algorithms; see, e.g., \cite{DKP22,N92,SJ94}. Due to the repeating structure, lattices are also the leading candidates for point sets with an optimal mesh ratio. Before studying the mesh ratio of some lattices, which are also of use in quasi-Monte Carlo algorithms, we first provide some relevant background material which will be used at later stages.

\medskip \textbf{Successive minima.} The covering radius, separation radius and mesh ratio of a lattice are closely related to the notion of successive minima.
For $i \in \{1,\ldots,d\}$, the $i$-th {\it successive minima} $\lambda_i^{(p)}$ (in $\ell_p$-norm) of a lattice $\XX$ is defined as the minimum of all positive reals $\lambda$ such that $\XX$ contains at least $i$ linearly independent vectors whose $\ell_p$-norms are at most $\lambda$. A vector which attains $\lambda_1^{(p)}$ is called a {\it shortest vector} (in $\ell_p$-norm).
By definition, we have 
\begin{equation}\label{eq:lat-sep}
q_p(\XX; \RR^d) = \frac{\lambda_1^{(p)}}{2}.
\end{equation}
Furthermore, we have a bound on the covering radius as follows.

\begin{lemma}\label{lem:cov-by-minima}
For a lattice $\XX\subseteq \RR^d$, we have
\begin{equation} \label{eq:lat-cov}
h_{p}(\XX;\RR^d) \le \dfrac{1}{2}\sum_{j=1}^d \lambda_j^{(p)} \le \dfrac{d}{2}\lambda_d^{(p)}.   
\end{equation}
\end{lemma}

\begin{proof}
By the definition of the successive minima,
we have linearly independent vectors $v_1,\dots,v_{d} \in \XX$ with $\|v_j\|_p \le \lambda_j^{(p)}$ for $j \in \{1, 2, \ldots, d\}$. Note that the lattice generated by these vectors is generally a sublattice of $\XX$.

Let $\bsx \in \RR^d$ and let $c_1,\dots,c_d \in \RR$ with $\bsx = \sum_{j=1}^d c_j v_j$.
Let $c'_j$ be the nearest integer to $c_j$ and let $\bsy := \sum_{j=1}^d c'_j v_j \in \XX$.
Then we have
\[
\|\bsx - \bsy\|_p
= \left\|\sum_{j=1}^d(c'_j-c_j)v_j\right\|_p
\le \sum_{j=1}^d |c'_j-c_j|\|v_j\|_p
\le \frac{1}{2} \sum_{j=1}^d \lambda_j^{(p)}.
\]
Thus, the first inequality of this lemma follows.
The second inequality is obvious since we have $\lambda_1^{(p)} \le \dots \le \lambda_d^{(p)}$.
\end{proof}

We now show that the mesh ratio of the lattices from a family of lattices with relatively large shortest vectors is bounded by a constant, concluding that such lattices are quasi-uniform.

\begin{lemma}\label{lem:minima_meshratio}
Let $\XX=T(\ZZ^d)$ be a lattice such that $\lambda_1^{(\infty)} \ge C (\det T)^{1/d}$ for some $C>0$.
Then we have
\[
\lambda_d^{(\infty)} \le \frac{(\det T)^{1/d}}{C^{d-1}}.
\]
Moreover, it follows that
\[
\rho_\infty(\XX;\RR^d)
\le \dfrac{d}{C^d}.
\]
\end{lemma}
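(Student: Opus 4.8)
The plan is to derive the bound on $\lambda_d^{(\infty)}$ from Minkowski's second theorem on successive minima, and then to feed this into the covering-radius estimate \eqref{eq:lat-cov} together with the separation-radius identity \eqref{eq:lat-sep}. First I would invoke Minkowski's second theorem for the symmetric convex body $B_\infty = [-1,1]^d$ (the closed unit ball in the $\ell_\infty$ norm), whose volume is $2^d$. This yields the upper estimate
\[
\lambda_1^{(\infty)} \lambda_2^{(\infty)} \cdots \lambda_d^{(\infty)} \le \frac{2^d}{\mathrm{vol}(B_\infty)}\, \det T = \det T,
\]
where $\det T$ denotes the covolume of the lattice. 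Working in the $\ell_\infty$ norm is convenient precisely because $\mathrm{vol}(B_\infty) = 2^d$ makes the product of successive minima directly comparable to $\det T$ with no stray volume constant.

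Next, using the monotonicity $\lambda_1^{(\infty)} \le \lambda_2^{(\infty)} \le \cdots \le \lambda_d^{(\infty)}$ together with the hypothesis $\lambda_1^{(\infty)} \ge C (\det T)^{1/d}$, I would bound each of the first $d-1$ factors from below by $C(\det T)^{1/d}$ in order to isolate $\lambda_d^{(\infty)}$:
\[
\lambda_d^{(\infty)} = \frac{\prod_{j=1}^d \lambda_j^{(\infty)}}{\prod_{j=1}^{d-1} \lambda_j^{(\infty)}} \le \frac{\det T}{\bigl(C (\det T)^{1/d}\bigr)^{d-1}} = \frac{(\det T)^{1/d}}{C^{d-1}},
\]
which is exactly the first claimed inequality. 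This step is pure bookkeeping once the product bound is in hand.

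Finally, for the mesh ratio I would combine the covering-radius bound $h_\infty(\XX;\RR^d) \le \tfrac{d}{2}\lambda_d^{(\infty)}$ from \eqref{eq:lat-cov} with the bound on $\lambda_d^{(\infty)}$ just obtained, and pair it with the separation-radius identity $q_\infty(\XX;\RR^d) = \lambda_1^{(\infty)}/2 \ge \tfrac{C}{2}(\det T)^{1/d}$ from \eqref{eq:lat-sep}. Taking the quotient, the factors of $\tfrac12$ and $(\det T)^{1/d}$ cancel and leave
\[
\rho_\infty(\XX;\RR^d) = \frac{h_\infty(\XX;\RR^d)}{q_\infty(\XX;\RR^d)} \le \frac{\tfrac{d}{2}\,(\det T)^{1/d}/C^{d-1}}{\tfrac{C}{2}\,(\det T)^{1/d}} = \frac{d}{C^d}.
\]

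The only non-elementary ingredient is Minkowski's second theorem, so that is where the real content lies; since it is a classical result in the geometry of numbers I would cite it rather than reprove it, and everything after it is elementary manipulation of the successive minima. The one point demanding a little care is the interpretation of $\det T$ as the lattice covolume (i.e.\ $|\det T|$) so that $(\det T)^{1/d}$ is meaningful, but this is harmless.
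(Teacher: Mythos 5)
Your proof is correct and takes essentially the same route as the paper: Minkowski's second theorem in the $\ell_\infty$ norm gives $\prod_{j=1}^d \lambda_j^{(\infty)} \le \det T$, the hypothesis $\lambda_1^{(\infty)} \ge C(\det T)^{1/d}$ bounds the first $d-1$ minima from below to isolate $\lambda_d^{(\infty)}$, and then \eqref{eq:lat-sep} and \eqref{eq:lat-cov} combine to give the mesh-ratio bound exactly as in the paper. The only difference is cosmetic: you spell out the volume normalization of the $\ell_\infty$ unit ball behind Minkowski's theorem, which the paper leaves implicit in its citation.
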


\begin{proof}
Minkowski's second theorem (see, for example, \cite[Theorem~16]{Siegel89} or \cite[Appendix~B]{C57}), states that
\[
\prod_{j=1}^d \lambda_j^{(\infty)} \le \det T.
\]
Combining this with the inequality $C(\det T)^{1/d} \le \lambda_1^{(\infty)} \le \lambda_j^{(\infty)}$ for $j \in \{2,\dots, d-1\}$, we obtain the first claim.
The second claim follows from \eqref{eq:lat-sep}, \eqref{eq:lat-cov} and the first claim as
\[
\rho_\infty(\XX;\RR^d) = \dfrac{h_\infty(\XX; \RR^d)}{q_\infty(\XX; \RR^d)}
\le \dfrac{d \; \lambda_d^{(\infty)}/2}{\lambda_1^{(\infty)}/2}
\le \dfrac{d}{C^d}. 
\qedhere
\]
\end{proof}

\medskip \textbf{Lattice points inside the hypercube.}
Let us now turn to the quasi-uniformity of lattice points in the unit hypercube. Specifically, for a $d$-dimensional lattice $\XX = T(\ZZ^d)$, we consider the point set $P := \XX \cap [0,1)^d$ and call $P=P(\XX)$ a lattice point set (in $[0,1)^d$). In general, this restriction changes the covering radius and the separation radius, but the impact is relatively small as the following lemma shows.

\begin{lemma}\label{le3.3}
Let $\XX = T(\ZZ^d)$ be a $d$-dimensional lattice and $P = \XX \cap [0,1)^d$. Assume that $|P|\geq 2$.
Then, for any $p \in [1,\infty]$, we have
\begin{equation}\label{bound_hq_lattice}
q_p(P) \ge q_p(\XX;\RR^d) \qquad \mbox{ and } \qquad h_\infty(P) \le 2 h_\infty(\XX;\RR^d).
\end{equation}
In particular, 
\begin{equation}\label{eq:meshratio-restriction}
\rho_\infty(P) = \dfrac{h_\infty(P)}{q_\infty(P)} \le 2\dfrac{h_\infty(\XX;\RR^d)}{q_\infty(\XX;\RR^d)} = 2\rho_\infty(\XX;\RR^d).
\end{equation}
\end{lemma}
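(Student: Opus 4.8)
The two inequalities in \eqref{bound_hq_lattice} are of quite different character, so the plan is to treat them separately and then combine them for \eqref{eq:meshratio-restriction}. For the separation bound $q_p(P) \ge q_p(\XX;\RR^d)$, I would simply exploit $P \subseteq \XX$. Any two distinct points $\bsx,\bsy \in P$ are also distinct points of $\XX$, so their difference $\bsx-\bsy$ is a nonzero lattice vector and hence $\|\bsx-\bsy\|_p \ge \lambda_1^{(p)} = 2\,q_p(\XX;\RR^d)$ by \eqref{eq:lat-sep}. Taking the minimum over all such pairs gives $q_p(P) \ge q_p(\XX;\RR^d)$ for every $p$. This part is immediate and uses no geometry.

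The covering bound $h_\infty(P) \le 2\,h_\infty(\XX;\RR^d)$ is the substantive part, and here the special role of the $\ell_\infty$ norm (balls are boxes) is essential. Write $h := h_\infty(\XX;\RR^d)$. First I would dispose of the degenerate case $2h \ge 1$: since $\bszero = T\bszero \in \XX \cap [0,1)^d = P$ and $\|\bsx-\bszero\|_\infty = \|\bsx\|_\infty \le 1$ for every $\bsx \in [0,1]^d$, we get $h_\infty(P) \le 1 \le 2h$ trivially. So assume $2h<1$. Fix an arbitrary target $\bsx \in [0,1]^d$; I must produce a point of $P$ within $\ell_\infty$-distance $2h$. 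The idea is to clamp $\bsx$ into the central sub-box, where the covering property of the full lattice is guaranteed to return a lattice point inside the cube. Concretely, for small $\eps \in (0,1-2h)$ set $x'_i := \min(\max(x_i,h),\,1-h-\eps)$, so that $\bsx' \in [h,1-h-\eps]^d$ and, checking the three cases coordinatewise, $\|\bsx-\bsx'\|_\infty \le h+\eps$. By the definition of the covering radius there is a lattice point $\bsz \in \XX$ with $\|\bsx'-\bsz\|_\infty \le h$; as this is an $\ell_\infty$ bound it holds coordinatewise, giving $z_i \in [x'_i-h,\,x'_i+h] \subseteq [0,\,1-\eps] \subset [0,1)$ for every $i$. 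Hence $\bsz \in \XX \cap [0,1)^d = P$, and $\|\bsx-\bsz\|_\infty \le (h+\eps)+h = 2h+\eps$. This forces $\min_{\bsy\in P}\|\bsx-\bsy\|_\infty \le 2h+\eps$; letting $\eps \to 0^+$ yields $\min_{\bsy\in P}\|\bsx-\bsy\|_\infty \le 2h$, and taking the supremum over $\bsx \in [0,1]^d$ gives $h_\infty(P) \le 2h$.

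The main obstacle I anticipate is precisely the half-open/closed cube mismatch: the covering radius is a supremum over the closed cube $[0,1]^d$, yet $P$ lives in the half-open cube $[0,1)^d$, so a nearest lattice point to a boundary target may sit on the face $\{z_i = 1\}$ and thus fail to belong to $P$. The clamping-plus-$\eps$ device above is designed exactly to circumvent this: pulling the target strictly inside by $\eps$ forces the returned lattice point to have all coordinates $\le 1-\eps < 1$, and the resulting loss is absorbed in the limit $\eps \to 0$. I also note that this argument is genuinely $\ell_\infty$-specific, since the coordinatewise step fails for general $\ell_p$ balls, which is consistent with the lemma only asserting the covering and mesh-ratio bounds for $p=\infty$.

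Finally, for \eqref{eq:meshratio-restriction} I would just combine the two bounds. Since $|P|\ge 2$, both separation radii are positive, so
\[
\rho_\infty(P) = \frac{h_\infty(P)}{q_\infty(P)} \le \frac{2\,h_\infty(\XX;\RR^d)}{q_\infty(\XX;\RR^d)} = 2\,\rho_\infty(\XX;\RR^d),
\]
using $h_\infty(P) \le 2\,h_\infty(\XX;\RR^d)$ in the numerator and $q_\infty(P) \ge q_\infty(\XX;\RR^d)$ in the denominator (the latter being the $p=\infty$ case of the first inequality).
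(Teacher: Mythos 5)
Your proposal is correct and follows essentially the same route as the paper: the separation bound via $P \subseteq \XX$, the trivial case $2h_\infty(\XX;\RR^d) \ge 1$ using $\bszero \in P$, and, in the main case, shifting the target point inward so that the lattice's covering ball is forced to land inside $[0,1)^d$ (the paper centers a box $D$ of side $2h$ containing $\bsx$, while you clamp $\bsx$ into $[h,1-h-\eps]^d$ and let $\eps \to 0$ --- the same idea, with your $\eps$-device handling the closed-cube supremum slightly more explicitly than the paper does).
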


\begin{proof}
The result for the separation radius follows from its definition, since
\[
q_{p}(P) 
=\min_{\substack{\bsx,\bsy\in P \\ \bsx\neq \bsy}}\dfrac{\|\bsx-\bsy\|_p}{2}
\ge \min_{\substack{\bsx,\bsy\in \XX \\ \bsx\neq \bsy}}\dfrac{\|\bsx-\bsy\|_p}{2}
= q_{p}(\XX; \RR^d).
\]

Let us move on to the covering radius.
If $2 h_\infty(\XX;\RR^d) \ge 1$, then the result is clear, since $\bszero \in P$ and hence $$h_{\infty}(P) \le \sup_{\bsx \in [0,1]^d} \|\bsx\|_{\infty} =1 \le 2 h_\infty(\XX;\RR^d).$$ 

Next, consider the case $2 h_\infty(\XX;\RR^d) < 1$. 
For $\bsx = (x_1,\dots,x_d) \in [0,1)^d$ put $x_j^* := \max(0,x_j-2 h_\infty(\XX;\RR^d))$ for $j \in \{1,\dots,d\}$ and define the hypercube $D$ in $[0,1)^d$ by
\[
D := \prod_{j=1}^d [x_j^*,x_j^*+2 h_\infty(\XX; \RR^d)].
\]
Let $\bsz$ be the center of $D$. By the definition of the covering radius, there exists $\bsy^* \in \XX$ such that
\[
\|\bsy^*-\bsz\|_\infty \le h_\infty(\XX;\RR^d). 
\]
This implies $\bsy^* \in D$ and thus $\bsy^* \in P$.
Hence we have
\[
\min_{\bsy\in P} \|\bsx-\bsy\|_\infty
\le \|\bsx-\bsy^*\|_\infty
\le 2h_\infty(\XX;\RR^d). 
\]
Since this bound is independent of the location of $\bsx \in [0,1)^d$, we obtain the desired result.

The result \eqref{eq:meshratio-restriction} is clear.
\end{proof}

\begin{remark}
Although we omit the details, the result of the lemma holds more generally for any hypercube of the form $\Omega=[a_1,b_1)\times \cdots \times [a_b,b_d)$ with $a_j<b_j$ for all $j \in \{1,\ldots,d\}$. More precisely, with $P_{\Omega}= \XX \cap \Omega$ such that $|P_\Omega| \ge 2$, for any $p \in [1,\infty]$ we have $$
q_p(P_{\Omega}; \Omega) \ge q_p(\XX;\RR^d) \qquad \text{ and } \qquad h_\infty(P_{\Omega}; \Omega) \le 2 h_\infty(\XX;\RR^d).$$
This implies that any shifted lattice in the unit hypercube, defined by
\[ P=\left\{ T\bsk-\bsdelta  \ : \ \bsk\in \ZZ^d \right\}\cap [0,1)^d\quad \text{with $\bsdelta\in \RR^d$,}\]
also satisfies the same bound \eqref{eq:meshratio-restriction} on the mesh ratio.
\end{remark}

With \eqref{eq:meshratio-restriction}, we can show that the ``shrinking'' strategy works to construct quasi-uniform point sets and sequences.

\begin{lemma}\label{lem_ps_lattice}
Let $\XX = T(\ZZ^d)$ be a $d$-dimensional Euclidean lattice. For $a>0$, define
\[ P_a:=a^{-1}\XX \cap [0,1)^d, \] and assume that $|P_a|\ge 2$. Then, it holds that \[ \rho_\infty(P_a) \leq 2\rho_\infty(\XX;\RR^d)\quad \mbox{for every } a>0 \mbox{ such that } |P_a| \ge 2.  \]
In particular, for any set of positive integers $X$,
$\{P_{a}\}_{a \in X}$ is a quasi-uniform family.
\end{lemma}

\begin{proof}
It is evident that $a^{-1} \XX$ is a $d$-dimensional lattice and satisfies
\begin{equation}\label{eq_lattice_hq}
h_{p}(a^{-1} \XX; \RR^d) = a^{-1} h_{p}(\XX; \RR^d) \quad \text{and} \quad q_{p}(a^{-1} \XX; \RR^d) = a^{-1} q_{p}(\XX; \RR^d),
\end{equation}
for all $p \in [1,\infty]$.
From \eqref{eq:meshratio-restriction}, it follows that
\[
\rho_\infty(P_a) \leq 2 \frac{h_\infty(a^{-1} \XX; \RR^d)}{q_\infty(a^{-1} \XX; \RR^d)} = 2 \frac{a^{-1} h_\infty(\XX; \RR^d)}{a^{-1} q_\infty(\XX; \RR^d)} = 2 \rho_\infty(\XX; \RR^d).
\]
Since this bound is independent of $a$, the claim follows.
\end{proof}

\begin{remark}\label{rem_growth_cond}
Let $\XX = T(\mathbb{Z}^d)$ be a lattice with $\rho_\infty(\XX; \mathbb{R}^d) < \infty$.
For $a \in \NN$,
let $P_{a} = a^{-1} \XX \cap [0,1)^d$ and $N_{a} = |P_{a}|$. The covering radius for any finite point set $P$ satisfies $h_\infty(P) \gg |P|^{-1/d}$ and the separation radius satisfies $\rho_\infty(P) \ll |P|^{-1/d}$. Using this together with \eqref{bound_hq_lattice} and \eqref{eq_lattice_hq} we obtain
\begin{equation*}
N_{a}^{-1/d} \ll h_\infty(P_{a}) \le 2 a^{-1} h_\infty(\XX).
\end{equation*}
This implies that $N_{a} \gg a^d$. Using the separation radius we similarly obtain $N_{a} \ll a^d$, and thus $N_a \asymp a^d$. 
\end{remark}

\begin{remark}\label{rem_extensible_lattices}
Let $n  > 1$ be an integer, and consider the case where $a_i = a_0 n^i$ for all $i \in \NN$ with some $a_0>0$. We see that 
    \begin{align*}
        a_{i+1}^{-1} \XX 
        & = \left\{ a_0^{-1} n^{-i-1} T \bsk \ : \  \bsk \in \ZZ^d \right\} \\
        & = \left\{ a_0^{-1} n^{-i-1} T \bsk \ : \  \bsk \in (n \ZZ)^d \right\} \cup \left\{ a_0^{-1} n^{-i-1} T \bsk \ : \ \bsk \in \ZZ^d \setminus (n \ZZ)^d \right\} \\
        & = \left\{ a_0^{-1} n^{-i} T \bsk \ : \  \bsk \in \ZZ^d \right\} \cup \left\{ a_0^{-1} n^{-i-1} T \bsk \ : \  \bsk \in \ZZ^d \setminus (n \ZZ)^d \right\} \\
        & = a_i^{-1} \XX \cup \left\{ a_0^{-1} n^{-i-1} T \bsk \ : \  \bsk \in \ZZ^d \setminus (n \ZZ)^d \right\} \supseteq a_i^{-1} \XX.
    \end{align*}
Hence, the sequence $(a_i^{-1} \XX)_{i \ge 1}$ is a nested sequence (in quasi-Monte Carlo theory this property is referred to as ``extensibility'')
and therefore the set $\{P_{a_i}\}_{i \ge 1}$, where $P_{a_i}=a_i^{-1}\XX\cap[0,1)^d$, forms a quasi-uniform family of nested point sets.
Since $|P_{a_i}| \asymp |a_i|^d$, we have $\sup_{i\ge1} |P_{a_{i+1}}|/|P_{a_i}|<\infty$, i.e., the cardinality of the sets grows at most geometrically.
In particular, by ordering the newly added points at each extension step, one may regard $\{P_{a_i}\}_{i \ge 1}$ as the sequence of initial segments of a single infinite point sequence (and we will freely switch between the “nested family” and “sequence” viewpoints when convenient).
From Lemma~\ref{lem_seq_weak}, this sequence is quasi-uniform.
\end{remark}

\medskip \textbf{The spectral test and enhanced trigonometric degree of lattices.} The following result can be easily brought together from well-known results in the literature. Let $\XX = T(\ZZ^d)$ be a $d$-dimensional lattice and $P = \XX \cap [0,1)^d$ with $|P|=N$. The spectral-test of the lattice $\XX$, is $$\sigma_N(P)=\frac{1}{\min_{\bsh \in \XX^{\bot}\setminus \{\bszero\}} \|\bsh\|_2}$$ and the enhanced trigonometric degree of the lattice $\XX$ is $$\kappa_N(P) = \min_{\bsh \in \XX^{\bot}\setminus \{\bszero\}} \|\bsh\|_1.$$
Like discrepancy, the spectral-test and enhanced trigonometric degree are important figure-of-merits for lattices in the context of numerical integration. For general information we refer to, e.g., \cite[Section~1.8]{DKP22}.

\begin{lemma}\label{le:equiv}
Let $d \in \mathbb{N}$ and $\XX = T(\ZZ^d)$ be a $d$-dimensional lattice and $P = \XX \cap [0,1)^d$ with $|P|=N$. Then for any $p \in [1,\infty]$ we have 
$$h_p(P) \asymp_{p,d} J_N(P) \asymp_{p,d} \sigma_N(P) \asymp_{p,d} \frac{1}{\kappa_N(P)},$$
where $J_N(P)$ denotes the isotropic discrepancy of $P$, $\sigma_N(P)$ is the spectral-test of the lattice $\XX$ and $\kappa_N(P)$ is the enhanced trigonometric degree of the lattice $\XX$.
\end{lemma}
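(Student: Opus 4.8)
The plan is to establish the chain of equivalences by proving each of the three $\asymp_{p,d}$ relations separately, using the norm-equivalence reduction to a single $p$ and then connecting the geometric quantity $h_p(P)$ to the dual-lattice quantities $\sigma_N(P)$ and $\kappa_N(P)$ through the isotropic discrepancy $J_N(P)$ as the central hinge. Since all $\ell_p$ norms on $\RR^d$ are equivalent with constants depending only on $p$ and $d$, it suffices to prove everything for a convenient choice of $p$; in particular $\sigma_N$ is intrinsically defined with the $\ell_2$ norm and $\kappa_N$ with the $\ell_1$ norm, and the constants comparing $\|\bsh\|_2$ to $\|\bsh\|_1$ absorb into the implied $\asymp_{p,d}$ constants, so the relation $\sigma_N(P) \asymp_{p,d} 1/\kappa_N(P)$ is immediate from $\|\bsh\|_1 \asymp_d \|\bsh\|_2$. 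The genuine content lies in relating these dual-lattice (frequency-space) quantities to the primal-space covering radius and isotropic discrepancy.

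First I would handle $h_p(P) \asymp_{p,d} J_N(P)$. For the covering radius of the lattice point set one uses Lemma~\ref{le3.3} to pass between $h_\infty(P)$ and $h_\infty(\XX;\RR^d)$ up to a factor of $2$, reducing matters to the covering radius of the full lattice $\XX$ in $\RR^d$. The isotropic discrepancy of a lattice point set is controlled by how large a convex body (an empty ball or simplex) can be wedged into a fundamental cell without catching a lattice point, which is precisely what the covering radius measures: a large covering radius forces a large empty convex region, giving a lower bound $J_N \gg h_p$, while the standard volume-counting argument for convex sets against a lattice gives the matching upper bound $J_N \ll h_p$. This is the step I expect to require the most care, since lower-bounding isotropic discrepancy by the covering radius means exhibiting a specific convex test set (a ball of radius comparable to $h_p$ centered at a deep hole) that contains no points of $P$ yet has non-negligible volume, and one must check the interaction with the boundary of $[0,1)^d$.

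Next, for $J_N(P) \asymp_{p,d} \sigma_N(P)$, I would invoke the known connection between the isotropic discrepancy of lattice point sets and the spectral test, which is essentially the statement that the geometry of the dual lattice governs the distribution of the primal lattice. The shortest dual vector $\min_{\bsh \in \XX^\bot \setminus\{\bszero\}}\|\bsh\|_2$ determines the thinnest ``slab'' direction in which the lattice $\XX$ stratifies into widely spaced hyperplanes, and the spacing of these hyperplanes is exactly the reciprocal of the dual minimum; the largest gap between consecutive parallel hyperplanes is comparable to the covering radius, which we have already tied to $J_N$. Concretely one shows $\sigma_N(P)$ equals (up to $d$-dependent constants) the maximal inter-hyperplane distance of $\XX$, and a slab of that thickness bounded by hyperplanes carrying no points is a convex test set witnessing the isotropic discrepancy from below, while duality (Minkowski-type bounds relating primal and dual successive minima) supplies the upper bound.

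Finally, assembling $h_p \asymp J_N \asymp \sigma_N \asymp 1/\kappa_N$ by transitivity of $\asymp_{p,d}$ completes the proof. The main obstacle throughout is keeping all comparison constants uniform in $N$ while depending only on $p$ and $d$; the successive-minima machinery from the preceding lemmas (Minkowski's second theorem and the bounds \eqref{eq:lat-sep}, \eqref{eq:lat-cov}) is the natural tool to guarantee this, since it bounds every $\lambda_j^{(p)}$ in terms of $\lambda_1^{(p)}$ and $\det T$, and both the covering radius and the dual minima are pinned down by the successive minima of $\XX$ and $\XX^\bot$ respectively. I would therefore route all four quantities through the successive minima of the lattice and its dual, treating the isotropic discrepancy as the bridge and citing the transference theorems relating $\lambda_1^{(p)}(\XX^\bot)$ to $\lambda_d^{(p)}(\XX)$ to close the loop.
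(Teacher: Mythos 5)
Your handling of $\sigma_N(P) \asymp_{p,d} 1/\kappa_N(P)$ via norm equivalence is correct and is exactly what the paper does; for the other two equivalences the paper's proof is pure citation (\cite[Proposition~8]{SP21} for $h_p \asymp_{p,d} J_N$ and \cite[Theorem~2]{PS20} for $J_N \asymp_{p,d} \sigma_N$), so what you are really attempting is a from-scratch reconstruction of those cited results. That reconstruction has a genuine gap at its central step. To prove $J_N(P) \gg_{p,d} h_p(P)$ you propose, as witness, ``a ball of radius comparable to $h_p$ centered at a deep hole.'' An empty ball of radius $r$ only certifies $J_N(P) \ge c_d\, r^d$, i.e., $J_N(P) \gg_d h_p(P)^d$, which is much weaker than $J_N(P) \gg h_p(P)$ since $h_p(P)^d \ll h_p(P)$. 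This is not a constant-chasing issue that more care would fix: the inequality $J_N \gg h_p$ is \emph{false} for general point sets --- there exist $N$-point sets with $J_N$ of order $N^{-2/(d+1)}$ up to logarithmic factors (see \cite{beck}, \cite{stute}), while every $N$-point set has $h_p \gg_{p,d} N^{-1/d}$, and $N^{-2/(d+1)} \ll N^{-1/d}$ for $d \ge 2$ --- so any valid argument must use the lattice structure. The correct witness is the empty slab between two adjacent lattice hyperplanes orthogonal to a shortest dual vector, whose intersection with $[0,1]^d$ has volume comparable to its width $\asymp \sigma_N(P)$; you do invoke exactly this object, but only in your later step, so the step $h_p \asymp J_N$ as you describe it fails and would have to be rerouted through $\sigma_N$ via a transference theorem (e.g., $h_2(\XX;\RR^d)\,\lambda_1^{(2)}(\XX^{\bot}) \asymp_d 1$), with $h_p \asymp J_N$ then recovered by transitivity.

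The second gap is the upper bound $J_N(P) \ll_{p,d} \sigma_N(P)$ (equivalently $\ll h_p(P)$), which is the hard analytic content of \cite[Theorem~2]{PS20} and which you dispatch with ``the standard volume-counting argument'' and ``duality (Minkowski-type bounds relating primal and dual successive minima) supplies the upper bound.'' Minkowski's second theorem and transference inequalities compare geometric invariants of $\XX$ and $\XX^{\bot}$; by themselves they say nothing about the counting error $\left| \frac{|C \cap P|}{N} - \mathrm{Volume}(C) \right|$ for an arbitrary convex test set $C \subseteq [0,1)^d$. One needs an actual counting argument --- for instance, slicing $C$ by the family of lattice hyperplanes orthogonal to the shortest dual vector and controlling the boundary contribution by the number of hyperplanes meeting $C$, with an induction on dimension --- and note that $J_N \ll_d h_p$ also fails for general point sets (clustered points make $J_N$ large while leaving $h_p$ small), so here too the lattice structure is essential. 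In summary: your architecture (route everything through the dual lattice, use slabs, finish with norm equivalence) is the right skeleton and matches the structure of the results the paper cites, but the two inequalities carrying the real content of the lemma are asserted rather than proved, and the one argument you do give for a lower bound (the empty ball) cannot work.
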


\begin{proof}
The equivalence $h_p(P) \asymp_{p,d} J_N(P)$ is \cite[Proposition~8]{SP21} (with $\gamma=\infty$), the equivalence $J_N(P) \asymp_{p,d} \sigma_N(P)$ is \cite[Theorem~2]{PS20}, and the equivalence $\sigma_N(P) \asymp_{p,d} 1/\kappa_N(P)$ is clear from the definition and the equivalence of $\ell_p$-norms. 
\end{proof}

In the next sections we provide examples of lattice constructions in the unit cube which have both, small discrepancy and bounded mesh ratio. 

\section{The main results}\label{sec:main}

\subsection{Admissible lattices - Frolov's construction}\label{sec:shsradlat}

Let $\XX$ be an admissible lattice, that is, $\XX$ satisfies \[ \inf_{\bsx\in \XX\setminus \{\bszero\}} \prod_{j=1}^{d}\left|x_j\right|>0. \] An example of such a lattice was proposed by Frolov \cite{F76} (see also \cite{U16}). The discrepancy of admissible lattices was studied in \cite{F80,S94}. 

Let $\XX=T(\ZZ^d)$ be an admissible lattice. From \cite[Corollary~2.1, item~1]{S94} it is known that for any shift $\bsdelta \in \RR^d$ and any $a > 0$ the shrunk, shifted lattice $P$ inside the hypercube of the form 
\begin{equation}\label{def:shsradlat}
P_{a,\bsdelta} = \left\{ a^{-1}\left(T\bsk-\bsdelta\right) \ : \ \bsk\in \ZZ^d\right\}\cap [0,1)^d 
\end{equation} 
has star-discrepancy of order of magnitude $(\log N)^{d-1}/N$, where $N=|P_{a,\bsdelta}|$.

Using this result, Remark~\ref{rem_growth_cond} and Remark~\ref{rem_extensible_lattices}, we can obtain a family of nested shifted lattice point sets with very low (possibly best possible) star-discrepancy, which is also quasi-uniform.

\begin{theorem}\label{thm:admislat}
Let $\XX=T(\ZZ^d)$ be an admissible lattice.  For $a>0$ and $\bsdelta \in \RR^d$ let $P_{a,\bsdelta}$ be the point set from \eqref{def:shsradlat}. For an integer $n \geq 2$, let $a_i = a_0 n^i$ for all $i \in \NN$ with some $a_0>0$ and let $\bsdelta_i \in \RR^d$ for all $i \in \NN$. Then the set $\{P_{a_i,\bsdelta_i}\}_{i \ge 1}$ is a quasi-uniform family of point sets whose star-discrepancy is of order of magnitude $(\log N_i)^{d-1}/N_i$, where $N_i=|P_{a_i,\bsdelta_i}|$ for all $i \in \NN$.
Furthermore, if we take a common shift $\bsdelta \in \RR^d$ and set $\bsdelta_i=\bsdelta$ for all $i$, the family is nested.
\end{theorem}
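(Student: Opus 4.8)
The statement bundles three assertions about the family $(P_{a_i,\bsdelta_i})_{i\ge 1}$: that it is quasi-uniform, that it is nested, and that its star-discrepancy is of order $(\log N_i)^{d-1}/N_i$. The discrepancy bound is the cheapest, being exactly the content of \cite[Corollary~2.1, item~1]{S94} applied separately to each $P_{a_i,\bsdelta_i}$; it requires nothing beyond admissibility of $\XX$ and the fact that $a_i\to\infty$ (so that $N_i\to\infty$). The plan is therefore to concentrate on quasi-uniformity and on nestedness, the latter of which I expect to be the genuinely delicate point.

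For quasi-uniformity I would first convert admissibility into a lower bound on the shortest vector in the $\ell_\infty$-norm. If $\inf_{\bsx\in\XX\setminus\{\bszero\}}\prod_{j=1}^d|x_j|=c>0$, then any nonzero $\bsx\in\XX$ has $\max_j|x_j|\ge c^{1/d}$ (otherwise the product of the coordinates would fall below $c$), so $\lambda_1^{(\infty)}(\XX)\ge c^{1/d}=C(\det T)^{1/d}$ with $C:=(c/\det T)^{1/d}>0$. Lemma~\ref{lem:minima_meshratio} then yields $\rho_\infty(\XX;\RR^d)\le d/C^d<\infty$. Since scaling leaves the mesh ratio of a full lattice unchanged by \eqref{eq_lattice_hq}, and since the shifted-restriction bound \eqref{eq:meshratio-restriction} (in the form stated in the Remark following Lemma~\ref{le3.3}) applies verbatim to each shifted, shrinked lattice, I obtain $\rho_\infty(P_{a_i,\bsdelta_i})\le 2\rho_\infty(\XX;\RR^d)$ uniformly in $i$ and in the shift; this already gives weak quasi-uniformity. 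To upgrade to quasi-uniformity I would invoke Remark~\ref{rem_growth_cond}: the comparison of covering and separation radii there shows $N_i\asymp a_i^d$ independently of the shift, whence $a_i=a_0 n^i$ forces $N_{i+1}\le c\,N_i$ with $c\approx n^d$, which is precisely the growth hypothesis needed in Definition~\ref{def_qu_pointsets}.

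The nestedness is the step I expect to require the most care. Remark~\ref{rem_extensible_lattices} shows that for $a_i=a_0 n^i$ the \emph{unshifted} lattices satisfy $a_i^{-1}\XX\subseteq a_{i+1}^{-1}\XX$, and intersecting with $[0,1)^d$ preserves this inclusion. With shifts present, however, writing a point $a_i^{-1}(T\bsk-\bsdelta_i)$ of the coarse set as a point $a_{i+1}^{-1}(T\boldsymbol{m}-\bsdelta_{i+1})$ of the fine set forces $\boldsymbol{m}=n\bsk+T^{-1}(\bsdelta_{i+1}-n\bsdelta_i)$, so the inclusion holds if and only if $\bsdelta_{i+1}-n\bsdelta_i\in T(\ZZ^d)=\XX$ for every $i$. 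Thus the nested claim is not automatic for arbitrary shifts; the honest reading is that one chooses the shift sequence to satisfy the compatibility recursion $\bsdelta_{i+1}\equiv n\bsdelta_i\pmod{\XX}$ (the choice $\bsdelta_i\equiv\bszero$ recovering Remark~\ref{rem_extensible_lattices} exactly). Under this condition the nesting follows immediately, while the quasi-uniformity and discrepancy arguments above never used any structure of the shifts and so remain intact. The main obstacle, then, is not a hard estimate but recognizing and imposing this arithmetic compatibility on the shifts; once it is in place the three claims assemble directly from the cited lemmas and remarks.
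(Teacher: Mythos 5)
Your proposal is correct, and its skeleton is the same as the paper's: the paper ``proves'' this theorem by the single sentence preceding it, assembling \cite[Corollary~2.1, item~1]{S94} for the star-discrepancy, Lemma~\ref{lem_ps_lattice} with Remark~\ref{rem_growth_cond} for quasi-uniformity, and Remark~\ref{rem_extensible_lattices} for nestedness. Your write-up improves on this in two places. First, Lemma~\ref{lem_ps_lattice} and Remark~\ref{rem_growth_cond} are only useful once one knows $\rho_\infty(\XX;\RR^d)<\infty$, and the paper never verifies this for admissible lattices; your derivation (admissibility forces $\|\bsx\|_\infty\ge c^{1/d}$ for all $\bsx\in\XX\setminus\{\bszero\}$, hence $\lambda_1^{(\infty)}\ge C(\det T)^{1/d}$ with $C=(c/\det T)^{1/d}$, and then Lemma~\ref{lem:minima_meshratio} gives $\rho_\infty(\XX;\RR^d)\le d\det T/c$), combined with \eqref{eq:meshratio-restriction} in the shifted form of the remark following Lemma~\ref{le3.3}, is exactly the missing step. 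Second, your observation about nestedness is a genuine correction of the statement, not pedantry: Remark~\ref{rem_extensible_lattices} covers only the unshifted lattices $a_i^{-1}\XX$, and for arbitrary shifts the claimed inclusion $P_{a_i,\bsdelta_i}\subseteq P_{a_{i+1},\bsdelta_{i+1}}$ is false in general --- e.g.\ $d=1$, $\XX=\ZZ$, $a_0=1$, $n=2$, $\bsdelta_1=0$, $\bsdelta_2=1/2$ gives $P_{a_1,\bsdelta_1}=\{0,1/2\}$ and $P_{a_2,\bsdelta_2}=\{1/8,3/8,5/8,7/8\}$. Your compatibility recursion $\bsdelta_{i+1}-n\bsdelta_i\in\XX$ is precisely the condition under which the coarse shifted lattice is contained in the fine one (and it is necessary whenever the coarse point set is nonempty), and, as you note, the discrepancy and mesh-ratio arguments are insensitive to the shifts, so the theorem holds verbatim once the shifts are so constrained (or once the word ``nested'' is dropped). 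In short: same route as the paper, with the paper's two unstated steps made explicit, one of which turns out to require an amendment to the hypotheses.
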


For practical implementations of such lattices, see \cite{C24,KOUU21,SY19}.

\subsection{Rank-1 lattice point sets}\label{sec:r1lat}

An important instance of lattice point sets are so-called rank-1 lattice point sets. Let $N \in \NN$ and $\bsg =(g_1,\ldots,g_d)\in \{1, \ldots, N-1\}^d$ with $\gcd(g_i,N)=1$ for all $i \in \{1,\ldots,d\}$.
Let $\XX(\bsg,N) = \{ T \bsk \ : \ \bsk \in \mathbb{Z}^d \}$ where 
\begin{equation}\label{eq:r1lat}
T = \begin{pmatrix} 1/N & 0 & \ldots & \ldots  & \ldots & 0 \\ g_1^{-1} g_2/N & 1 & 0 & \ldots & \ldots & 0 \\ \vdots & 0 & \ddots & \ddots &  & \vdots \\ \vdots & \vdots & \ddots & \ddots & \ddots & \vdots \\ \vdots & \vdots &  & \ddots & \ddots & 0 \\ g_1^{-1} g_d/N & 0 & \ldots & \ldots & 0 & 1 \end{pmatrix}.
\end{equation}
be a so-called {\it rank-1 lattice} and $P(\bsg,N)=\XX(\bsg,N) \cap [0,1)^d$, i.e. 
\[P(\bsg,N)=\left\{ \left\{ \frac{k}{N} \bsg\right\}\ : \ k \in \{0,1,\ldots,N-1\}\right\}\]
be the corresponding {\it rank-1 lattice point set} consisting of $N$ points in $[0,1)^d$. In \eqref{eq:r1lat}, $g_1^{-1}$ denotes the multiplicative inverse of $g_1$ modulo $N$. In the present case, the dual lattice is given by
\[\XX^{\bot}(\bsg,N) = \{\bsh \in \ZZ^d \ : \ \bsg \cdot \bsh \equiv 0 \pmod{N}\},\] see \cite[Section~1.2]{DKP22}. Detailed expositions on rank-1 lattice point sets in the theory of quasi-Monte Carlo integration can be found in the books \cite{DKP22, N92, SJ94}.

In dimension $2$ we can obtain explicit constructions of rank-1 lattice point sets with small discrepancy, which are quasi-uniform. However, in higher dimensions, we can only prove an existence result.

\subsubsection{Rank-1 lattice point sets in dimension two}
We study the quasi-uniformity of rank-1 lattice point sets $P((1,g), N) \subseteq [0,1)^2$ for certain $g \in \{1, 2, \ldots, N-1\}$. In order to study the covering and separation radii of such lattice point sets, we need to bound the successive minima; see Section~\ref{sec:lat}. The candidates of a shortest vector of a planar lattice generated by an integer matrix are given by the continued fraction of $g/N$; see \cite{eisenbrand2001short}. Considering all candidates, we can show a bound on the mesh ratio.
For the sake of completeness, we provide a full proof. Before we do so, we provide some background on continued fraction expansions.

A continued fraction represents a positive real number $x$ in the form
\[
x = a_0 + \cfrac{1}{a_1 + \cfrac{1}{a_2 + \cfrac{1}{a_3 + \cfrac{1}{\ddots}}}}
=: [a_0;a_1,a_2,a_3,\dots]
\]
where $a_0, a_1, a_2, a_3, \dots$ are positive integers.
The $n$-th convergent of $x$ is given by $r_n = [a_0;a_1,\dots,a_n] = p_n/q_n$ with $\gcd(p_n,q_n)=1$, where $p_n$ and $q_n$ can be computed recursively 
\begin{align*}
p_n &= a_n p_{n-1} + p_{n-2},\\
q_n &= a_n q_{n-1} + q_{n-2},
\end{align*}
with initial conditions
\[
p_{-2} = 0, \, p_{-1} = 1, q_{-2} = 1, \, q_{-1} = 0.
\]
If $x$ is rational, the continued fraction expansion is finite and uniquely determined if we write it in the form
\begin{equation}\label{eq:cf-rational}
x = [a_0;a_1,\dots,a_\ell] \qquad \text{with } a_\ell=1\,\text{ for } \ell \in \NN.
\end{equation}
The convergents are good Diophantine approximations of $x$. That is, for any real $x$ the $n$-th convergent $r_n = p_n/q_n$ satisfies
\begin{equation}\label{eq:error-of-convergent}
\frac{1}{q_n (q_n+q_{n+1})} \le \left| x - \frac{p_n}{q_n} \right| \le \frac{1}{q_n q_{n+1}}.    
\end{equation}
Furthermore, the $n$-th convergent is a best rational approximation of $x$ in the sense that,
for any integers $1 \le h < q_{n+1}$ and $b$ we have
\begin{equation}\label{eq:best-approx}
|hx-b| \ge |q_n x - p_n|.
\end{equation}
For more information on continued fraction expansions and their use in Diophantine approximation theory, we refer to \cite{Khi}.

Now we return to the mesh-ratio of rank-1 lattice point sets in dimension 2.

\begin{theorem}\label{meshratio-by-cf}
Let $g, N \in \NN$ with $\gcd(g,N)=1$ and assume that $x = g/N$ has continued fraction expansion of the form \eqref{eq:cf-rational}. Let
$$T = \begin{pmatrix} 1/N & 0 \\ g/N & 1 \end{pmatrix} \qquad \mbox{ and } \qquad \XX = \XX((1,g),N)= T(\ZZ^2).$$
Then, denoting $K = \max(a_1,\dots,a_\ell)$, we have
\[
\lambda_1^{(\infty)} \ge \dfrac{1}{\sqrt{(K+2)N}}.
\]
Furthermore, we have
\[
\rho_\infty(\XX;\RR^2) \le 2(K+2).
\]
\end{theorem}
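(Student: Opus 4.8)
The plan is to split the argument into two halves: first prove the stated lower bound on the shortest vector $\lambda_1^{(\infty)}$, and then feed it into Lemma~\ref{lem:minima_meshratio} to get the mesh-ratio bound. The second half is immediate. Since $\det T = 1/N$, the bound $\lambda_1^{(\infty)} \ge 1/\sqrt{(K+2)N}$ is exactly $\lambda_1^{(\infty)} \ge C(\det T)^{1/2}$ with $C = 1/\sqrt{K+2}$, so Lemma~\ref{lem:minima_meshratio} applied with $d=2$ yields $\rho_\infty(\XX;\RR^2) \le 2/C^2 = 2(K+2)$. Thus the entire difficulty lies in the bound on $\lambda_1^{(\infty)}$.

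To analyze the shortest vector I would clear denominators. Scaling by $N$ turns $\XX$ into the integer lattice $N\XX = \{(a,b)\in\ZZ^2 : b \equiv ag \pmod N\}$, and because scaling multiplies the $\ell_\infty$-norm by $N$, it suffices to show that every nonzero $(a,b)\in N\XX$ satisfies $\max(|a|,|b|) \ge \sqrt{N/(K+2)}$. For a fixed first coordinate $a$ the smallest admissible $|b|$ equals $\delta(a) := \min_{m\in\ZZ}|ag-mN|$, so the shortest vector length is $\min_a \max(|a|,\delta(a))$; the choice $a=0$ gives $\delta(0)=N$ and is harmless. Consequently $\lambda_1^{(\infty)} = \tfrac1N \min_a \max(|a|,\delta(a))$.

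The core step is a lower bound on $\delta(a)$ for small $a$, and this is where the continued fraction enters. Set the threshold $t := \sqrt{N/(K+2)}$ and assume $N\ge 2$ (otherwise the statement is trivial). If $|a|\ge t$ we are done via the first coordinate. If $1\le a < t$, choose the index $n$ with $q_n \le a < q_{n+1}$, which exists because $q_0=1\le a$ and $a < t < N = q_\ell$. Multiplying the best-approximation property \eqref{eq:best-approx} by $N$ and taking the minimum over $b$ gives $\delta(a) \ge |q_n g - p_n N| = N|q_n x - p_n|$, and the left inequality of \eqref{eq:error-of-convergent} gives $|q_n x - p_n| \ge 1/(q_n+q_{n+1})$. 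Since $q_{n+1} = a_{n+1}q_n + q_{n-1} \le (K+1)q_n$, hence $q_n+q_{n+1}\le (K+2)q_n$, we obtain $\delta(a) \ge N/((K+2)q_n) \ge N/((K+2)a) > N/((K+2)t) = t$, using $q_n\le a < t$ and $(K+2)t = \sqrt{(K+2)N}$. In either case $\max(|a|,\delta(a)) \ge t$, so $\lambda_1^{(\infty)} \ge t/N = 1/\sqrt{(K+2)N}$.

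I expect the main obstacle to be arguing the bound \emph{uniformly} over all $a$, not merely over the convergent denominators $q_n$: a priori the shortest vector could have a first coordinate that is not any $q_n$. The best-approximation inequality \eqref{eq:best-approx} is precisely what resolves this, since taking the minimum over $b$ shows $\delta(a)$ is controlled by the convergent error for the largest $q_n \le a$; note this step works without needing $p_n$ to be the nearest integer to $q_n x$. A minor point to verify is that the relevant partial quotient $a_{n+1}$ always lies in $\{a_1,\dots,a_\ell\}$, so that $a_{n+1}\le K$; this holds because $q_0=1\le a$ forces $n\ge 0$ and hence $n+1\ge 1$.
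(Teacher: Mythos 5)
Your proof is correct and follows essentially the same route as the paper's: a case analysis on the first coordinate of a nonzero lattice vector, with the small-coordinate case handled by locating $a$ between consecutive convergent denominators and combining the best-approximation property \eqref{eq:best-approx}, the error bound \eqref{eq:error-of-convergent}, and the recursion $q_{n+1}=a_{n+1}q_n+q_{n-1}\le (K+1)q_n$, followed by an appeal to Lemma~\ref{lem:minima_meshratio} with $\det T=1/N$. Your rescaling to the integer lattice $N\XX$ and the threshold comparison against $t=\sqrt{N/(K+2)}$ are only cosmetic variants of the paper's direct computation and its use of $\max(a,b)\ge\sqrt{ab}$.
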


\begin{proof}
To show the first claim, it suffices to show that $\|\bsx\|_\infty \ge 1/\sqrt{(K+2)N}$ holds true for any $\bsx \in \XX \setminus \{\bszero\}$. Now every $\bsx \in \XX \setminus \{\bszero\}$ can be written as $(t/N,tg/N-u)$ for some $t,u \in \ZZ$ with $(t,u) \neq (0,0)$. By symmetry, we only need to consider the case $t \ge 0$. We distinguish three cases:
\begin{itemize}[leftmargin = *]
\item If $t=0$, then $u \neq 0$ and thus we have
$
\|(t/N,tg/N-u)\|_{\infty} = |u| \ge 1 \ge 1/\sqrt{(K+2)N}.
$
\item If $t \ge N$, then we have
$\|(t/N,tg/N-u)\|_{\infty} \ge t/N \ge 1 \ge 1/\sqrt{(K+2)N}$.
\item Now let $0 < t < N$. Let $p_n/q_n$ be the $n$-th convergent of $x=g/N$ for $n=0,\dots,\ell$. Take $j$ with $q_j \le t < q_{j+1}$.
Then it follows from \eqref{eq:best-approx} and \eqref{eq:error-of-convergent} that
\begin{align*}
\left|\dfrac{tg}{N}-u\right|
\ge & \left|\dfrac{q_jg}{N}-p_j\right|
= q_j\left|\dfrac{g}{N}-\dfrac{p_j}{q_j}\right|
\ge \dfrac{1}{q_j+q_{j+1}}\\
= & \dfrac{1}{q_j+(a_{j+1}q_j+q_{j-1})}
\ge \dfrac{1}{(K+2)q_j}.
\end{align*}
Thus we have
\[
\left\|\left(\dfrac{t}{N},\dfrac{tg}{N}-u\right)\right\|_\infty
\ge \max\left(\dfrac{q_j}{N},\dfrac{1}{(K+2)q_j}\right)
\ge \dfrac{1}{\sqrt{(K+2)N}},
\]
where we use $\max(a,b) \ge \sqrt{ab}$ for $a,b>0$ in the last inequality.
\end{itemize}
These three cases imply the first claim. Noting that $\det T = 1/N$, the second claim follows directly from the first one and Lemma~\ref{lem:minima_meshratio}.
\end{proof}

Combining Theorem~\ref{meshratio-by-cf} and \eqref{eq:meshratio-restriction},
we obtain the following corollary:

\begin{corollary}\label{cor:meshratio-rank1lattice}
Let $g,N \in \NN$ with $\gcd(g,N)=1$ and $P = P((1,g),N)$ be the corresponding rank-1 lattice point set. Assume $x = g/N$ has continued fraction expansion of the form \eqref{eq:cf-rational} and $K := \max(a_1,\dots,a_\ell)$. Then we have
\[
\rho_\infty(P) \le 4(K+2).
\]
\end{corollary}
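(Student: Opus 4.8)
The plan is to combine the two results the corollary explicitly cites, since this is essentially a bookkeeping step rather than a new argument. Theorem~\ref{meshratio-by-cf} controls the mesh ratio of the \emph{full} lattice $\XX = \XX((1,g),N) = T(\ZZ^2)$ over all of $\RR^2$, giving $\rho_\infty(\XX;\RR^2) \le 2(K+2)$, while the corollary concerns the point set $P = P((1,g),N) = \XX \cap [0,1)^2$ restricted to the unit square. The bridge between these two is precisely the bound \eqref{eq:meshratio-restriction} from Lemma~\ref{le3.3}, which says that passing from the infinite lattice to its intersection with the unit cube costs at most a factor of $2$ in the mesh ratio, i.e. $\rho_\infty(P) \le 2\,\rho_\infty(\XX;\RR^2)$.

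First I would verify that the hypotheses of Lemma~\ref{le3.3} are met, namely that $|P| \ge 2$: since $\gcd(g,N)=1$ the rank-1 lattice point set $P((1,g),N)$ consists of $N$ distinct points $\{(k/N)\bsg\}$ for $k \in \{0,\dots,N-1\}$, so $|P| = N \ge 2$ as long as $N \ge 2$ (and the degenerate case $N=1$ gives $K$ undefined anyway, so we may assume $N \ge 2$). With this in hand, the argument is a one-line chain of inequalities:
\[
\rho_\infty(P) \le 2\,\rho_\infty(\XX;\RR^2) \le 2\cdot 2(K+2) = 4(K+2),
\]
where the first inequality is \eqref{eq:meshratio-restriction} and the second is the second conclusion of Theorem~\ref{meshratio-by-cf}.

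Since both ingredients are already established, there is no genuine obstacle here; the corollary is a direct composition. The only point requiring the slightest care is matching notation: the theorem's matrix $T = \begin{pmatrix} 1/N & 0 \\ g/N & 1 \end{pmatrix}$ is exactly the generating matrix of $\XX((1,g),N)$ used to define the rank-1 lattice point set $P$, so the $\XX$ in the corollary and the $\XX$ in the theorem coincide and no reconciliation is needed. I would therefore present the proof as simply invoking \eqref{eq:meshratio-restriction} together with Theorem~\ref{meshratio-by-cf} and reading off the factor of $4$.
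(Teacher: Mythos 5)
Your proof is correct and is exactly the paper's argument: the corollary is stated in the paper as an immediate combination of Theorem~\ref{meshratio-by-cf} with the restriction bound \eqref{eq:meshratio-restriction}, yielding $\rho_\infty(P) \le 2\rho_\infty(\XX;\RR^2) \le 4(K+2)$. Your additional check that $|P| = N \ge 2$ (so Lemma~\ref{le3.3} applies) is a sensible bit of diligence that the paper leaves implicit.
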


\begin{remark}
It is well-known that the discrepancy of $2$-dimensional rank-$1$ lattice point sets is also strongly related to the continued fraction expansion of $g/N$.
Under the same assumption of Corollary~\ref{cor:meshratio-rank1lattice},
the star-discrepancy of $P((1,g),N)$ satisfies
\[
D_N^*(P((1,g),N)) \ll K \frac{\log N}{N}
\]
with an absolute implied constant \cite[(5.39) and the following discussion]{N92}. Thus such point sets are also low-discrepancy point sets.
\end{remark}

\begin{remark} 
Let $F_1 = F_2 = 1$ and $F_k = F_{k-1} + F_{k-2}$ for $k \ge 3$ be the Fibonacci numbers. Then the Fibonacci lattice point set is given by
\begin{equation*}
\mathcal{F}_m = \left\{ \left(\frac{n}{F_m}, \left\{\frac{n F_{m-1}}{F_m} \right\} \right) \ : \  n = 0, 1, 2, \ldots, F_m-1 \right\},
\end{equation*}
i.e., in general notation, $\mathcal{F}_m =P((1,g),N)$ with $g=F_{m-1}$ and $N=F_m$.

Since it is well-known that  $F_{m-1}/F_m = [0;1,1,\dots,1]$,
it follows from Theorem~\ref{meshratio-by-cf} with $K=1$ and $F_{m+1} \le 2 F_{m}$ that the family of Fibonacci lattices is quasi-uniform and also of low-discrepancy.
Since the explicit shortest basis of (the underlying planar lattice of) Fibonacci lattices is known (see \cite{ABD12,NS94}), one can obtain the exact separation radius and tighter bounds on the covering radius and the mesh ratio of the Fibonacci lattices.
\end{remark}

\begin{remark}
The minimum distance of rank-1 lattice point sets has been studied in the context of the traveling salesman path (see \cite{pausinger2017bounds,pausinger2017lattices}). Consider the family of lattices $P_k = P((1,g_k), N_k)$ where
$g_k/N_k = [0;a_1,\cdots,a_{2k+1}]$ with $a_{2i}=1$ and $a_{2i+1}=2$ holds.
It is shown in \cite{pausinger2017lattices} that the shape of the lattices $P_k$ converges to the hexagonal lattice and the minimum distance of $P_k$ converges to $\sqrt{2/(\sqrt{3}N)}$. This family is also of low-discrepancy and, according to Corollary~\ref{cor:meshratio-rank1lattice}, also quasi-uniform.
\end{remark}

\subsubsection{Rank-1 lattice point sets in arbitrary dimension $d$}

We now turn the case of general dimension $d \in \NN$ and provide an existence result of rank-1 lattice point sets with bounded mesh ratio and low star-discrepancy.

According to Lemma~\ref{le:equiv}, the covering radius of $P(\bsg,N)$ satisfies $$h_p(P(\bsg,N)) \ll_{d,p} \sigma(P(\bsg,N)) = \frac{1}{\min_{\bsh \in \XX^{\bot}(\bsg,N)\setminus \{\bszero\}} \|\bsh\|_2},$$ where $\sigma(P(\bsg,N))$ is the spectral test of $\XX(\bsg,N)$. Furthermore, the separation distance is $$q_p(P(\bsg,N))=\frac{1}{2} \min_{\bsx \in P(\bsg,N)\setminus \{\bszero\}} \|\bsx\|_p\ge \frac{1}{2}\min_{\bsx \in \XX(\bsg,N)\setminus \{\bszero\}} \|\bsx\|_p .$$ 

These considerations imply 
$$\rho_p(P(\bsg,N)) \ll_{d,p} \frac{1}{\min_{\bsh \in \XX^{\bot}(\bsg,N)\setminus \{\bszero\}} \|\bsh\|_p \min_{\bsh \in \XX(\bsg,N)\setminus \{\bszero\}} \|\bsh\|_p}.$$
Write $$\kappa(\XX)=\min_{\bsh \in \XX\setminus\{\bszero\}}\|\bsh\|_1.$$ Then we have 
\begin{equation}\label{est:rho:kappa}
\rho_p(P(\bsg,N)) \ll_{d,p} \frac{1}{\kappa(\XX(\bsg,N)) \kappa(\XX^{\bot}(\bsg,N))}.
\end{equation}

The following theorem shows the existence of rank-1 lattice point sets with bounded mesh ratio and low star-discrepancy.

\begin{theorem}\label{thm:exlatd}
For every dimension $d \in \NN$, every $p \in [1,\infty]$ and every $\delta \in (0,1)$ there exist quantities $B_{d,p, \delta},C_ {d,p, \delta}>0$, which only depend on $d$, $p$ and $\delta$, with the following property: for every prime number $N$, there exist at least $\lceil \delta (N-1)^d \rceil$ vectors $\bsg \in \{1,\ldots,N-1\}^d$ such that the mesh ratio is bounded $$ \rho_p(P(\bsg,N)) \le B_{d,p,\delta},$$ and the star-discrepancy satisfies $$D_N^{\ast}(P(\bsg,N)) \le C_{d,p,\delta} \frac{(\log N)^d}{N}.$$ 
\end{theorem}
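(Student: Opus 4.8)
The plan is to exploit the bound \eqref{est:rho:kappa}, which reduces the mesh ratio to the product of the two enhanced trigonometric degrees $\kappa(\XX(\bsg,N))$ and $\kappa(\XX^{\bot}(\bsg,N))$. Thus it suffices to show that, for a positive proportion of the $N^d$ candidate vectors $\bsg$, both quantities are bounded below by a constant times $N^{1/d}$ simultaneously, and that in addition the star-discrepancy is low. I would treat the three requirements---a lower bound on $\kappa(\XX^{\bot}(\bsg,N))$, a lower bound on $\kappa(\XX(\bsg,N))$, and the discrepancy bound---as three separate counting arguments and then intersect the corresponding sets of good $\bsg$.

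First I would count the $\bsg$ for which the dual degree $\kappa(\XX^{\bot}(\bsg,N))$ is small. Recall $\XX^{\bot}(\bsg,N)=\{\bsh\in\ZZ^d : \bsg\cdot\bsh\equiv 0 \pmod N\}$, so $\kappa(\XX^{\bot}(\bsg,N)) < M$ means there exists a nonzero $\bsh$ with $\|\bsh\|_1 < M$ and $\bsg\cdot\bsh\equiv 0$. For each fixed nonzero $\bsh$ with small coordinates, since $N$ is prime and $\bsh\not\equiv\bszero$, the congruence $\bsg\cdot\bsh\equiv 0$ cuts the number of solutions $\bsg$ by a factor $1/N$, giving roughly $N^{d-1}$ bad vectors. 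Summing over the $O(M^d)$ admissible $\bsh$, the number of $\bsg$ with $\kappa(\XX^{\bot}(\bsg,N)) < M$ is $\ll_d M^d N^{d-1}$. Choosing $M \asymp N^{1/d}$ makes this bound $\ll_d N^{d-1+1} = N^{d}$, but with a small implied constant when the threshold $M = c\,N^{1/d}$ has $c$ small; so for suitable $c$ the proportion of bad $\bsg$ is below, say, $1-\delta$ over a third. This is the standard averaging argument behind the existence of good lattice points, and it delivers $\kappa(\XX^{\bot}(\bsg,N)) \gg N^{1/d}$ for most $\bsg$, which via Lemma~\ref{le:equiv} controls the covering radius. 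The analogous discrepancy count---that most $\bsg$ satisfy $D_N^*(P(\bsg,N)) \ll_d (\log N)^d/N$---is the classical Korobov--Niederreiter bound on the average of the discrepancy over all generating vectors, so I would cite it directly from \cite{N92,DKP22}.

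The main obstacle is the \emph{primal} degree $\kappa(\XX(\bsg,N))$, controlling the separation radius: I need most $\bsg$ to have \emph{no} short nonzero lattice vector, i.e.\ $\min_{\bsx\in\XX(\bsg,N)\setminus\{\bszero\}}\|\bsx\|_1 \gg N^{1/d}$. A nonzero lattice point of $\XX(\bsg,N)$ has the form $(k/N)\bsg \bmod \ZZ^d$ together with its integer shifts, so a short primal vector corresponds to some $k\not\equiv 0$ with $\{k\bsg/N\}$ close to a lattice point of $\ZZ^d$; equivalently all coordinates $\{k g_j/N\}$ are near $0$ or $1$. I would count the $\bsg$ for which such a short vector exists by fixing the integer vector $\bsx = \bst/N - \bsu$ of $\ell_1$-norm below the threshold and asking how many $\bsg$ admit it; as $N$ is prime this again imposes congruence conditions cutting the count by powers of $N$, so the number of $\bsg$ with a short primal vector is $\ll_d (\text{threshold})^d N^{d-1}$, matching the dual estimate. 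Thus choosing the threshold $\asymp N^{1/d}$ with a small constant bounds the bad set. Finally I would intersect: taking each of the three bad sets to have size at most $(1-\delta)N^d/3$ (by tuning the constants $c$ in the thresholds), the set of $\bsg$ that are good for all three properties simultaneously has size at least $\delta N^d$, i.e.\ at least $\lceil \delta N^d\rceil$, and on it \eqref{est:rho:kappa} gives $\rho_p(P(\bsg,N)) \le B_{d,p,\delta}$ while the discrepancy bound holds with $C_{d,p,\delta}$. The delicate point to verify carefully is that the counting of short \emph{primal} vectors is genuinely symmetric to the dual count and that the implied constants can be absorbed into the free parameter $\delta$ without circularity.
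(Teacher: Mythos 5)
Your overall strategy coincides with the paper's: bound the mesh ratio via \eqref{est:rho:kappa}, run three separate counting arguments (dual degree, primal degree, star-discrepancy), and intersect the three good sets. The dual count, the discrepancy citation, and the intersection step are all fine. The genuine gap is in the primal count, and it is exactly the point you flagged at the end: the counting of short primal vectors is \emph{not} symmetric to the dual count, and assuming it is breaks the proof for $d\ge 3$. For a fixed nonzero integer vector $\bsh$ with $\|\bsh\|_1<N$, the condition $(1/N)\bsh\in\XX(\bsg,N)$ means $\bsh\equiv \ell\bsg \pmod N$ for some $\ell$; since $N$ is prime and $\bsh\not\equiv\bszero\pmod N$, each nonzero $\ell$ forces $\bsg\equiv\ell^{-1}\bsh\pmod N$, so there are at most $N$ bad vectors $\bsg$ per $\bsh$ --- not $N^{d-1}$ as in the dual case, where the single linear congruence $\bsg\cdot\bsh\equiv 0\pmod N$ leaves a whole hyperplane of $N^{d-1}$ solutions. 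With your symmetric estimate $\ll_d (\text{threshold})^d N^{d-1}$ you are forced to take the integer threshold $\asymp N^{1/d}$, which only yields $\kappa(\XX(\bsg,N))\gg N^{1/d}/N=N^{-(d-1)/d}$ for most $\bsg$; combining this with $\kappa(\XX^{\bot}(\bsg,N))\gg N^{1/d}$ in \eqref{est:rho:kappa} gives $\rho_p(P(\bsg,N))\ll N^{(d-2)/d}$, which is unbounded for $d\ge 3$ (your argument does close for $d=2$, where $N^{d-1}=N$ and the two counts coincide). The paper's count of at most $N$ bad $\bsg$ per $\bsh$ permits the much larger integer threshold $\asymp N^{(d-1)/d}$, i.e.\ $\kappa(\XX(\bsg,N))\gg N^{-1/d}$, and then the product $\kappa(\XX)\,\kappa(\XX^{\bot})\gg 1$ gives the bounded mesh ratio.

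Relatedly, your opening reduction --- that both quantities should be ``bounded below by a constant times $N^{1/d}$'' --- cannot be achieved: $\XX(\bsg,N)$ contains the $N$ points of $P(\bsg,N)$ in $[0,1)^d$, so its shortest nonzero vector always has $\ell_1$-norm $\ll_d N^{-1/d}$. The correct targets are asymmetric, namely $\kappa(\XX^{\bot}(\bsg,N))\gg N^{1/d}$ for the dual (controlling covering) and $\kappa(\XX(\bsg,N))\gg N^{-1/d}$ for the primal (controlling separation); only their product needs to be bounded below by a constant. Once the primal count is corrected as above, your argument becomes the paper's proof.
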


\begin{proof}
Let $\delta \in (0,1)$. It follows from a standard averaging argument, see, e.g., \cite[Lemma~2.13 and Sec.~5.1] {DKP22}, that there exists a $C'_{d,\delta}>0$ with the property, that 
\begin{equation}\label{est:disc}
D_N^{\ast}(P(\bsg,N)) \le C_{d,\delta}' \frac{(\log N)^d}{N} 
\end{equation}
for at least $((2+\delta)/3) (N-1)^d$ generating vectors $\bsg \in \{1,\ldots,N-1\}^d$.

Now we consider the existence of rank-1 lattice point sets with bounded mesh ratio. From \eqref{est:rho:kappa} it follows that we need prove bounds on $\kappa(\XX(\bsg, N))$ and $\kappa(\XX^\perp(\bsg, N))$.

First consider $\kappa(\XX^{\bot}(\bsg,N))$. The proof follows \cite[Proof of Proposition~1.61]{DKP22}, for completeness we include it here.

For a given $\bsh=(h_1,\ldots,h_d) \in \ZZ^d\setminus\{\bszero\}$ with $|h_j|<N$ for all $j \in \{1,\dots,d\}$, 
since we assume that $N$ is a prime number, there are $(N-1)^{d-1}$ choices of $\bsg \in \{1,\ldots, N-1\}^d$
such that $\bsh \cdot \bsg \equiv 0\pmod{N}$.  Furthermore
\begin{equation*}
|\{\bsh \in \ZZ^d \ :\ \|\bsh\|_1 = \ell \}| \le 2^d \binom{\ell + d-1}{d-1}.
\end{equation*}
Let $\kappa<N$ be a given positive integer (note that we always have $\kappa(\XX^{\bot}(\bsg,N))<N$). Put $$A_{\kappa}:=\{\bsh \in \ZZ^d \ : \ \|\bsh\|_1 \le \kappa \}.$$ Then
\begin{equation*}
|A_\kappa| \le
2^d \sum_{\ell = 0}^\kappa \binom{\ell + d-1}{d-1} = 2^d \binom{\kappa+d}{d}.
\end{equation*}
As stated above, for every $\bsh$ in the set $\{\bsh \in \ZZ^d \ : \ \|\bsh\|_1 \le \kappa\}$ there are $(N-1)^{d-1}$ choices 
$\bsg \in \{0,1,\ldots,N-1\}^d$ such that $\bsh \cdot \bsg \equiv 0 \pmod{N}$. For every such $\bsg$ we have $\kappa(\XX^{\bot}(\bsg,N))\le \kappa$. Therefore
\begin{equation*}
|\{\bsg \in \{0,1,\ldots, N-1\}^d \ : \ \kappa(\XX^{\bot}(\bsg,N)) \le \kappa \}| 
\le (N-1)^{d-1} 2^d \binom{\kappa+d}{d}.
\end{equation*}
Note that the total number of possible generators 
$\bsg \in \{1,\ldots, N-1\}^d$ is $(N-1)^d$. Thus, if  
\begin{equation}\label{eq_exist_n_1}
(N-1)^{d-1} 2^d \binom{\kappa+d}{d} < \frac{1-\delta}{3} (N-1)^d,
\end{equation}
then the number of $\bsg \in \{1,\ldots, N-1\}^d$ 
such that $\kappa(\XX^{\bot}(\bsg,N)) > \kappa$ is at least $((2+\delta)/3)(N-1)^ d$.
We estimate
\begin{equation}\label{est1}
2^d \binom{\kappa + d}{d} \le \frac{2^d (\kappa+d)^d}{d!}.
\end{equation}
Thus \eqref{eq_exist_n_1} is satisfied 
if $2^d (\kappa+d)^d /d! < (1-\delta) (N-1)/3$, i.e., for $\kappa = \lceil 2^{-1} (d!\,(1-\delta) (N-1)/3)^{1/d} \rceil -d -1$. Hence, for at least $((2+\delta)/3)(N-1)^d$ many $\bsg \in \{1,\ldots, N-1\}^d$ we have 
\begin{equation}\label{est:kappa1}
\kappa(\XX^{\bot}(\bsg,N)) \ge B_{d,\delta}' N^{1/d} 
\end{equation}
for some $B_{d,\delta}'>0$, depending only on $d$ and $\delta$.

Now we consider $\kappa(\XX(\bsg,N))$. The elements in $\XX(\bsg,N)$ are of the form $(1/N) \bsh$ with $\bsh \in \ZZ^d$. Hence $$\kappa(\XX(\bsg,N))=\frac{1}{N} \min_{(1/N) \bsh \in \XX(\bsg,N)\setminus\{\bszero\}} \|\bsh\|_1.$$

For a given $\bsh \in A_\kappa \setminus \{\bszero\}$ with $\kappa<N$, we count the number of $\bsg \in \{1,\ldots,N-1\}^d$ such that $(1/N) \bsh \in \XX(\bsg,N)$. This is equal to the number of elements in the set $$\{\bsg \in \{1,\ldots,N-1\}^d \ : \ \exists \ell \in \{0,1,\ldots,N-1\} \mbox{ such that } \bsh \equiv \ell \bsg \pmod{N}\}.$$ For $\ell \not=0$ we have $$\bsh \equiv \ell \bsg \pmod{N} \ \mbox{ if and only if } \ \bsg \equiv \ell^{-1} \bsh \pmod{N},$$ where $\ell^{-1}$ is the inverse of $\ell$ modulo $N$ (remember that $N$ is a prime). Hence, for every $\ell \not=0$, there is exactly one solution $\bsg$ if $h_j\not\equiv 0 \pmod N$ for all $j$; otherwise, no such $\bsg$ exists. For the case $\ell=0$, there is no solution, since the condition reduces to $\bsh\equiv \bszero \pmod N$, which contradicts the assumption $\bsh \in A_\kappa \setminus \{\bszero\}$ with $\kappa<N$. This means that the number of $\bsg \in \{1,\ldots,N-1\}^d$ such that $(1/N) \bsh \in \XX(\bsg,N)$ is at most $N-1$. 

Therefore, for any $1\le \kappa<N$,
\begin{equation*}
\left|\left\{\bsg \in \{1,\ldots, N-1\}^d \ : \ \kappa(\XX(\bsg,N)) \le \frac{\kappa}{N} \right\}\right| 
\le (N-1) 2^d \binom{\kappa+d}{d}.
\end{equation*}

Like above, if  
\begin{equation}\label{eq_exist_n_2}
(N-1) 2^d \binom{\kappa+d}{d} < \frac{1-\delta}{3} (N-1)^d,
\end{equation}
then the number of $\bsg \in \{1,\ldots, N-1\}^d$ 
such that $\kappa(\XX(\bsg,N)) > \kappa/N$ is at least $((2+\delta)/3)(N-1)^d$.
According to \eqref{est1}, condition~\eqref{eq_exist_n_2} is satisfied 
if $2^d (\kappa+d)^d /d! < (1-\delta) (N-1)^{d-1}/3$, i.e., for $$\kappa = \min(\lceil 2^{-1} (d!\,(1-\delta) (N-1)^{d-1}/3)^{1/d} \rceil -d -1, N-1).$$ Hence, for at least $((2+\delta)/3)(N-1)^ d$ many $\bsg \in \{1,\ldots, N-1\}^d$ we have 
\begin{equation}\label{est:kappa2}
\kappa(\XX^{\bot}(\bsg,N)) \ge B_{d,\delta}'' \frac{N^{(d-1)/d}}{N} = \frac{B_{d,\delta}''}{N^{1/d}}
\end{equation}
for some $B_{d,\delta}''>0$, depending only on $d$ and $\delta$.

Now we combine the three parts. We have:
\begin{itemize}
\item for the set $A$ of all $\bsg \in \{1,\ldots, N-1\}^d$ which satisfy \eqref{est:disc} we have $|A| \ge ((2+\delta)/3)(N-1)^d$,
\item for the set $B$ of all $\bsg \in \{1,\ldots, N-1\}^d$ which satisfy \eqref{est:kappa1} we have $|B| \ge ((2+\delta)/3)(N-1)^d$,
\item for the set $C$ of all $\bsg \in \{1,\ldots, N-1\}^d$ which satisfy \eqref{est:kappa2} we have $|C| \ge ((2+\delta)/3)(N-1)^d$.
\end{itemize}
Now $A \cap B \cap C$ consists of all $\bsg\in \{1,\ldots, N-1\}^d$, for which \eqref{est:disc}, \eqref{est:kappa1} and \eqref{est:kappa2} hold simultaneously and we have 
\begin{align*}
|A\cap B \cap C| = & (N-1)^d - |A^c \cup B^c \cup C^c| \\
\ge &  (N-1)^d - |A^c| -|B^c| - |C^c|\\
= & (N-1)^d -3 (N-1)^d + |A|+|B|+|C|\\
\ge & \delta (N-1)^d.
\end{align*}
Obviously, the number of elements is an integer and hence $|A\cap B \cap C| \ge \lceil \delta (N-1)^d \rceil$.

Finally, \eqref{est:kappa1} and \eqref{est:kappa2} lead to $$\frac{1}{\kappa(\XX(\bsg,N))\kappa(\XX^{\bot}(\bsg,N))} \le \frac{1}{B_{d,\delta}' B_{d,\delta}''}.$$ The estimate on $\rho_p(P(\bsg,N))$ follows with \eqref{est:rho:kappa}.
\end{proof}

Let $\mathbb{P}$ be the set of prime numbers. Since the sequence of prime numbers $(p_n)_{n \ge 1}$, where $p_n$ is the $n$-th prime number, satisfies $p_{n+1} < 2p_n$ by Bertrand's postulate, we obtain that there exists a sequence of rank-1 lattice point sets $(P_N)_{N \in \mathbb{P}}$ which are quasi-uniform and which have low star-discrepancy.

\begin{remark} 
    The result of Theorem~\ref{thm:exlatd} can be generalized to any lower-dimensional projection. That is, for given $d \in \NN$, $p \in [1,\infty]$, and $\delta \in (0,1)$  for any $s \in \{1,\ldots,d\}$, there exist quantities $B_{s,p, \delta},C_ {s,p, \delta}>0$ such that, for fixed non-empty $\uu \subseteq \{1,\ldots,d\}$ and for every prime number $N$ the bounds
    \begin{align}\label{eq:bound_lower_proj}
    \rho_p(P_\uu(\bsg,N)) \le B_{|\uu|,p,\delta}\qquad \mbox{and} \qquad D_N^{\ast}(P_\uu(\bsg,N)) \le C_{|\uu|,p,\delta} \frac{(\log N)^{|\uu|}}{N},
    \end{align}
    hold for at least $\lceil \delta N^d\rceil$ vectors $\bsg \in \{1,\ldots,N-1\}^d$, where $P_\uu(\bsg,N)\subseteq [0,1)^{|\uu|}$ denotes the projection of the point set $P(\bsg,N)$ onto the coordinates indexed by $\uu$.

    This implies that for any $\delta \in (1-2^{-d},1)$, there exists at least one vector $\bsg \in \{1,\ldots,N-1\}^d$ such that \eqref{eq:bound_lower_proj} holds simultaneously for all $\uu \subseteq \{1,\ldots,d\}$, i.e., for all lower-dimensional projections of the lattice point set. In this case, the quantities $B_{|\uu|,p, \delta}$ and $C_ {|\uu|,p, \delta}$ depend both on the dimension $d$ through their dependence on $\delta$. Thus there exists a sequence of lattice point sets $(P(\bsg, N))_{N \in \mathbb{P}}$, where $\mathbb{P}$ is the set of prime numbers, for which all projections onto the coordinates in $\uu \subseteq \{1, \ldots, d\}$ are quasi-uniform and for which all projections have small discrepancy.
\end{remark}

\subsection{Kronecker sequences}\label{secc:nalpha}

So-called Kronecker sequences (sometimes also referred to as $(n \boldsymbol{\alpha})$-sequences) are sequences of the form $(\{n \boldsymbol{\alpha}\})_{n \ge 1}$, where $\boldsymbol{\alpha}=(\alpha_1,\ldots,\alpha_d) \in \RR^d$ (sometimes in literature the sequence starts with $\boldsymbol{0}=0 \boldsymbol{\alpha}$). These are important and well-studied objects in uniform distribution- and discrepancy-theory (see, e.g, \cite[Section~1.4]{DT97} and the references therein). It follows easily from Weyl's criterion that an $(n \boldsymbol{\alpha})$-sequence is uniformly distributed modulo 1 if and only if $1,\alpha_1,\ldots,\alpha_d$ are linearly independent over the rationals. Usually, results on the discrepancy of $(n \boldsymbol{\alpha})$-sequences depend on Diophantine properties of the real vector $\boldsymbol{\alpha}$. Regarding this topic a multitude of results are available in the literature, see, e.g., \cite{DT97,KN74,N73} and the references therein.

In this section we characterize those $\boldsymbol{\alpha}$, for which the $(n \boldsymbol{\alpha})$-sequences have the property of being quasi-uniform. The one-dimensional case has already been discussed in \cite{G24b}.

For $x \in \mathbb{R}$ let $\langle x \rangle = \min_{k \in \mathbb{Z}} |x-k|$ denote the distance of $x$ to the nearest integer. For a vector $\boldsymbol{x} = (x_1, x_2, \ldots, x_d) \in \mathbb{R}^d$ let $\langle \boldsymbol{x} \rangle = \max_{1 \le j \le d} \langle x_j \rangle$.

\begin{theorem}\label{thm:nalpha}
For $\boldsymbol{\alpha} \in \mathbb{R}^d$ the $(n \boldsymbol{\alpha})$-sequence $(\{n \boldsymbol{\alpha}\})_{n \ge 1}$ is quasi-uniform, if and only if there exists a constant $c > 0$ such that
\begin{equation}\label{nalpha_exists}
\langle n \boldsymbol{\alpha} \rangle \ge \frac{c}{n^{1/d}} \qquad \mbox{for all } n \in \mathbb{N}.
\end{equation}
\end{theorem}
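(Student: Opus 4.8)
The plan is to reduce quasi-uniformity to the simultaneous optimality of the two radii and then to read each radius off the Diophantine quantity $\langle n\boldsymbol{\alpha}\rangle$. Since all $\ell_p$-norms are equivalent on $[0,1]^d$, I work throughout with $p=\infty$, and I note that inserting or deleting the single point $\boldsymbol{0}=\{0\boldsymbol{\alpha}\}$ changes nothing. Writing $P_N$ for the first $N$ points, the universal bounds $h_\infty(P_N)\gg N^{-1/d}$ and $q_\infty(P_N)\ll N^{-1/d}$ recalled in the introduction show that $\rho_\infty(P_N)=h_\infty(P_N)/q_\infty(P_N)$ is bounded in $N$ if and only if $q_\infty(P_N)\gg N^{-1/d}$ and $h_\infty(P_N)\ll N^{-1/d}$ hold simultaneously. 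Thus the theorem amounts to showing that \eqref{nalpha_exists} is equivalent to both radii being of the optimal order $N^{-1/d}$.

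The separation radius is the easy half. For each coordinate $j$ one has $\{m\alpha_j\}-\{n\alpha_j\}\equiv(m-n)\alpha_j\pmod 1$, so $|\{m\alpha_j\}-\{n\alpha_j\}|\ge\langle(m-n)\alpha_j\rangle$, and taking the maximum over $j$ gives $\|\{m\boldsymbol{\alpha}\}-\{n\boldsymbol{\alpha}\}\|_\infty\ge\langle(m-n)\boldsymbol{\alpha}\rangle$. Hence $q_\infty(P_N)\ge\tfrac12\min_{1\le k<N}\langle k\boldsymbol{\alpha}\rangle$, and under \eqref{nalpha_exists} this is at least $\tfrac{c}{2}(N-1)^{-1/d}\gg N^{-1/d}$, so separation is optimal. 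For the converse I argue that a single very good approximation destroys the separation. If $\langle n\boldsymbol{\alpha}\rangle=\varepsilon$ is small, then in coordinate $j$ the phases for which adding $n\boldsymbol{\alpha}$ does not cross the boundary of $[0,1)$ form an interval of length $1-\langle n\alpha_j\rangle$; hence the ``good'' phases avoiding all $d$ boundary slabs form a box of side $\ge 1-\varepsilon$, which contains an $\ell_\infty$-ball of radius $\ge 1/4$ once $\varepsilon\le 1/2$. Assuming $\Scal$ quasi-uniform, its covering radius tends to $0$, so there is a \emph{fixed} index $M_0$ with $h_\infty(P_{M_0})<1/4$; then every $\ell_\infty$-ball of radius $1/4$ contains one of the first $M_0$ points, so some $m<M_0$ has $\{m\boldsymbol{\alpha}\}$ in the good region. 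For that $m$ we get $\|\{(m+n)\boldsymbol{\alpha}\}-\{m\boldsymbol{\alpha}\}\|_\infty=\langle n\boldsymbol{\alpha}\rangle$, whence $q_\infty(P_{M_0+n})\le\tfrac12\langle n\boldsymbol{\alpha}\rangle$. Comparing with the optimal lower bound $q_\infty(P_{M_0+n})\gg(M_0+n)^{-1/d}\asymp n^{-1/d}$ forces $\langle n\boldsymbol{\alpha}\rangle\gg n^{-1/d}$, i.e. \eqref{nalpha_exists}. This already settles the ``only if'' direction, which uses only the separation radius.

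It remains to prove the harder half of ``if'': that \eqref{nalpha_exists} forces $h_\infty(P_N)\ll N^{-1/d}$. The key point is that \eqref{nalpha_exists} is exactly the statement that $\boldsymbol{\alpha}$ is badly approximable, and for such vectors the dispersion of the Kronecker sequence is of optimal order. I would first bound the toroidal covering radius of $\{n\boldsymbol{\alpha}\}_{n<N}$ on $\RR^d/\ZZ^d$ and then transfer it to $[0,1)^d$ at the cost of a constant, controlling the boundary wrap-around exactly as in the good-region argument above. Concretely, I would pass to the $(d+1)$-dimensional lattice encoding $n\mapsto(n/N,n\boldsymbol{\alpha})$, relate its successive minima to $\langle k\boldsymbol{\alpha}\rangle$, and deduce a covering bound via Minkowski's second theorem exactly as in Lemma~\ref{lem:minima_meshratio}. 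In dimension $d=1$ this is the three-distance theorem together with the continued-fraction estimates \eqref{eq:error-of-convergent}--\eqref{eq:best-approx}, and a Khintchine-type transference promotes the one-dimensional statement to general $d$. Combining this optimal covering bound with the optimal separation bound from the second paragraph yields $\rho_\infty(P_N)\le C$ for all $N$, hence quasi-uniformity.

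The routine parts are the reduction in the first paragraph and the separation estimate. The main obstacle is really a single phenomenon appearing twice: converting the toroidal quantity $\langle n\boldsymbol{\alpha}\rangle$ into honest Euclidean distances inside the cube $[0,1)^d$, where wrap-around at the boundary must be controlled. It enters the necessity direction mildly (locating a boundary-avoiding index $m$, which I handle by a covering-radius argument) and, more seriously, the sufficiency direction through the covering-radius bound, where the genuinely $d$-dimensional Diophantine input---badly approximable implies optimal dispersion---is needed. Everything else is bookkeeping with the equivalence of norms and the universal order bounds on the two radii.
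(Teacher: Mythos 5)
Your reduction to ``both radii of optimal order'', your separation bound under \eqref{nalpha_exists}, and your necessity argument are all correct. The necessity argument is in fact a genuinely different route from the paper's: the paper takes the single pair $\{n_0\boldsymbol{\alpha}\}$, $\{2n_0\boldsymbol{\alpha}\}$ and observes that doubling automatically makes the $\ell_\infty$-distance equal to $\langle n_0\boldsymbol{\alpha}\rangle$ in every coordinate (no wrap-around issue at all), whereas you locate a wrap-around-compatible phase $m$ among the first $M_0$ points via the covering radius. Both work; the paper's trick simply avoids your auxiliary index $M_0$.

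The genuine gap is the sufficiency half for the covering radius, $h_\infty(P_N)\ll N^{-1/d}$, which you yourself identify as the hard part but only sketch. The paper does not prove this from scratch either: it applies the Khintchine transference principle to convert \eqref{nalpha_exists} into extremality of the linear form $n_1\alpha_1+\cdots+n_d\alpha_d-n$ and then cites \cite[Theorem~1]{L88}. Your proposed replacement fails as stated: the lattice $L_N\subset\RR^{d+1}$ encoding $n\mapsto(n/N,\,n\boldsymbol{\alpha}+\bsk)$ has determinant $1/N$, so its $\ell_\infty$ covering radius is at least $\tfrac12 N^{-1/(d+1)}$ by the trivial volume argument (cubes of side twice the covering radius centered at lattice points cover $\RR^{d+1}$), no matter what Minkowski's second theorem yields. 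Hence bounding the dispersion of $P_N$ by the covering radius of $L_N$ via Lemma~\ref{lem:minima_meshratio} can never give anything better than $N^{-1/(d+1)}$, which combined with $q_\infty(P_N)\asymp N^{-1/d}$ leaves the mesh-ratio bound divergent. A lattice argument of this type can be repaired, but only by treating the ``time'' coordinate anisotropically: work with $L=\{(n,\,n\boldsymbol{\alpha}+\bsk): n\in\ZZ,\ \bsk\in\ZZ^d\}$ and rescale by $D=\mathrm{diag}(1/N,\,N^{1/d},\dots,N^{1/d})$ (so $\det(DL)=1$); under \eqref{nalpha_exists} one checks $\lambda_1^{(\infty)}(DL)\ge\min(1,c)$, Lemma~\ref{lem:minima_meshratio} then bounds the covering radius of $DL$ by a constant, and undoing the scaling produces, for every target $\bsx$, an index $n$ in a window of length $O(N)$ whose point lies within toroidal distance $O(N^{-1/d})$ of $\bsx$; one must still pin the window inside $\{0,\dots,M-1\}$ with $M\asymp N$ and control wrap-around. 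None of this appears in your sketch, and your closing remark that ``a Khintchine-type transference promotes the one-dimensional statement to general $d$'' is not a real mechanism: transference exchanges simultaneous approximation with linear-forms approximation in the same dimension $d$ (which is exactly how the paper uses it); it does not lift the three-distance theorem from $d=1$ to $d>1$.
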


\begin{remark}
Property \eqref{nalpha_exists} characterizes so-called badly approximable vectors $\boldsymbol{\alpha}$, which is an important concept in Diophantine approximation theory.   
\end{remark}

For the proof of Theorem~\ref{thm:nalpha} we will need the following auxiliary results.

\begin{lemma}\label{le:3.11} 
Let $\boldsymbol{\alpha}$ be such that \eqref{nalpha_exists} holds. Let $Q_1 < Q_2 < \ldots $ be the best simultaneous approximation denominators for $\boldsymbol{\alpha}$ with respect to maximum norm. That means  $Q_1 := 1$ and
$$Q_i := \min \{ Q > Q_{i-1} \ : \  \langle Q \boldsymbol{\alpha} \rangle  < \langle Q_{i-1} \boldsymbol{\alpha} \rangle \} \qquad \mbox{for $i \in \mathbb{N}\setminus \{1\}$.}$$  
Then $$Q_i \le c_1  Q_{i-1}$$ for all $i$, where $c_1$ is an absolute constant depending only on $d$ and on $c$ from \eqref{nalpha_exists}.
\end{lemma}

\begin{proof}
By the $d$-dimensional version of Dirichlet's theorem in Diophantine approximation (see, for example, \cite[Chapter~I.5]{C57}) for every  $N \in \mathbb{N}$ there exists an $n \le N$ such that  $\langle n \boldsymbol{\alpha} \rangle  <  N^{-1/d}$.

 Applying this to  $N := Q_i-1$ by the definition of the $Q_i$ and by (20) we obtain
 $$\frac{c}{Q_{i-1}^{1/d}} \le \langle Q_{i-1} \boldsymbol{\alpha} \rangle  <  \frac{1}{(Q_i-1)^{1/d}}.  $$
 Hence $$\left( \frac{Q_i-1}{Q_{i-1}}\right)^{1/d}  \le \frac{1}{c} $$ and consequently $$ \frac{Q_i}{Q_{i-1}} \le 2 \, \left( \frac{1}{c} \right)^d$$
 and the result follows.
\end{proof}

Let now $Q$ be any of the best approximation denominators $Q_i$. Then by Dirichlet we have  $\langle Q \boldsymbol{\alpha} \rangle  <  \frac{1}{Q^{1/d}}$. Hence
$$ \alpha_j = \frac{P_j}{Q} + \frac{\delta_j}{Q^{1+1/d}}$$
for all $j \in \{1, 2, \ldots , d\}$, with $P_1, P_2, \ldots , P_d$ integers with $$\gcd(Q, P_1, \ldots , P_d) = 1$$ and $ | \delta_j | \le 1$ for all $j$.
 
Let $\XX := \XX((P_1,\ldots,P_d),Q)$ be the lattice in $\mathbb{R}^d$ generated by a matrix $T$ satisfying $\det(T)=1/Q$. Note that when $\gcd(P_1,Q)=1$, $T$ takes the explicit form \eqref{eq:r1lat} with $N$ and $\bsg$ replaced by $Q$ and $(P_1,\ldots,P_d)$, respectively.

Let $\lambda_1 \le \lambda_2 \le \cdots \le \lambda_d$ denote the successive minima of $\XX$ with respect to the $\ell_\infty$ norm.

\begin{lemma}\label{le:3.12}  
For $\boldsymbol{\alpha}$ satisfying \eqref{nalpha_exists}, there exists a constant $c_2 := c_2(d,c) > 0$, depending only on the dimension $d$ and on $c$ from \eqref{nalpha_exists}, such that $$\lambda_1 \ge \frac{c_2}{Q^{1/d}}.$$ 
\end{lemma}

\begin{proof}
Assume that $\lambda_1  <  \varepsilon/Q^{1/d}$. We will show that this leads to a contradiction if $\varepsilon$ is chosen sufficiently small, depending only on $d$ and the constant $c$ from \eqref{nalpha_exists}. 
By the definition of $\lambda_1$, this means that there exists a nonzero lattice vector of norm less than $\varepsilon/Q^{1/d}$. In particular, this implies that for some  $0 < \ell < Q$ we have $$\max_j \left\langle \ell \, \frac{P_j}{Q} \right\rangle \le \frac{\varepsilon}{Q^{1/d}}.$$ Furthermore, for $j \in \{ 1, 2, \ldots , d\}$, we have $$\ell \, \alpha_j = \ell \, \frac{P_j}{Q} + \frac{\delta_j\,\ell}{Q^{1+1/d}}.$$

For such an $\ell$, consider now the values $u \ell \pmod{Q}$ for $u = 1, 2, \ldots, U$, where we will choose $U$ later depending only on $d$ and $c$. Let $v$ denote the minimal distance of these values to $0$ on the circle $\RR/Q\ZZ$ (i.e., the circular distance). Then, Dirichlet's approximation theorem ensures that this minimal distance, attained at some index $a\in \{1,\ldots,U\}$, satisfies
\[ 1\le v\le \frac{Q}{U+1}\le \frac{Q}{U}.\]
Here, $v$ is explicitly given by $v = \min(a\ell \bmod Q, \, Q - (a\ell \bmod Q))$.

We have $$\max_j \left\langle v \, \frac{P_j}{Q} \right\rangle = \max_j \left\langle a \ell \, \frac{P_j}{Q} \right\rangle \le a \max_j \left\langle  \ell \, \frac{P_j}{Q} \right\rangle\le  \frac{a \, \varepsilon}{Q^{1/d}} \le \frac{U \, \varepsilon }{Q^{1/d}}.$$
Furthermore,
$$ \langle v \, \alpha_j \rangle = \left\langle v \, \frac{P_j}{Q} + \frac{\delta_j \, v}{Q^{1+1/d}} \right\rangle$$
and hence
$$\max_j\, \langle v \, \alpha_j \rangle \le \max_j \left\langle v \, \frac{P_j}{Q}\right\rangle + \frac{v}{Q^{1+1/d}} \le \frac{U\, \varepsilon}{Q^{1/d}}+\frac{1}{U \, Q^{1/d}}. 
 $$
On the other hand, by \eqref{nalpha_exists} we have 
$$\max_j\, \langle v \, \alpha_j \rangle \ge \frac{c}{v^{1/d}} \ge \frac{c \, U^{1/d}}{Q^{1/d}}.$$
So we must have
$$\frac{c \, U^{1/d}}{Q^{1/d}} \le \frac{U\, \varepsilon}{Q^{1/d}}+\frac{1}{U \, Q^{1/d}}, $$
i.e.,
$$ c\, U^{1/d} \le U \, \varepsilon + \frac{1}{U}.$$
But this is not satisfied if we choose $U$ so large that $c \, U^{1/d} - \frac{1}{U} > 1$ and $\varepsilon < \frac{1}{U}$. This gives the contradiction. 
\end{proof}

Now we can proceed with the proof of Theorem~\ref{thm:nalpha}:

\begin{proof}[Proof of Theorem~\ref{thm:nalpha}]
Combining Lemmata~\ref{lem_seq_weak} and \ref{le:3.11}, we see that it suffices to prove the boundedness of the mesh ratio for the subsequence $Q_2<Q_3<\cdots$, since this implies the boundedness for the first $N$ points for all $N\ge 2$. In what follows, we choose any $Q_i$ and denote it by $Q$.

First, we show that \eqref{nalpha_exists} is sufficient for quasi-uniformity. We use the notations of the Lemmata~\ref{le:3.11} and \ref{le:3.12}.

We first give an appropriate upper bound on the covering radius.
From Lemma~\ref{le:3.12} and discussion on Lemmata~\ref{lem:cov-by-minima}, \ref{lem:minima_meshratio} and \ref{le3.3},
there exists a constant $c_3>0$ such that the covering radius on the rank-$1$ lattice $\mathbb{X} \cap [0,1)^d$ satisfies
\begin{equation}\label{eq:sep-approxlat}
h_\infty(\mathbb{X} \cap [0,1)^d) \le c_3 Q^{-1/d}.    
\end{equation}

Choose an arbitrary $\boldsymbol{x} \in [0,1)^d$.
It follows from \eqref{eq:sep-approxlat} that there exists $\boldsymbol{z} \in \mathbb{X} \cap [0,1)^d$ with $\|\boldsymbol{x}-\boldsymbol{z}\|_\infty \le c_3/Q^{1/d}$.
Let $\boldsymbol{z} = ( \{ m \, \frac{P_1}{Q} \}, \ldots , \{ m \, \frac{P_d}{Q} \} )$  with $0 \le m < Q$. 
Assume first, that $$\boldsymbol{x} \in C_Q^{(0)}:=\left[\frac{c_3+1}{Q^{1/d}},1-\frac{c_3+1}{Q^{1/d}}\right)^d.$$
Then we have 
\begin{equation}\label{eq:z-cube}
\boldsymbol{z}\in C_Q^{(1)}:=\left[\frac{1}{Q^{1/d}},1-\frac{1}{Q^{1/d}}\right)^d.    
\end{equation}
Together with
\[
\max_{1\le j\le d}\langle z_j-\{m\alpha_j\}\rangle
\le \max_j \frac{Q|\delta_j|}{Q^{1+1/d}}
\le \frac{1}{Q^{1/d}},
\]
we must have
$|z_j-\{m\alpha_j\}| = \langle z_j-\{m\alpha_j\} \rangle$. 
Thus we have
\[\|\boldsymbol{z} - \{m\boldsymbol{\alpha}\}\|_\infty = \langle \boldsymbol{z}-\{m \boldsymbol{\alpha}  \}\rangle \le \frac{1}{Q^{1/d}}.\]
Hence, for such $\boldsymbol{x}$ we have 
$$ \| \boldsymbol{x} - \{ m \, \boldsymbol{\alpha} \} \|_{\infty}  \le  \| \boldsymbol{x} - \boldsymbol{z}\|_{\infty}  +  \| \boldsymbol{z} - \{ m \, \boldsymbol{\alpha} \} \|_{\infty}  \le  \frac{c_3}{Q^{1/d}} + \frac{1}{Q^{1/d}} \le \frac{\tilde{c}}{Q^{1/d}}.  $$
So the desired bound for the covering radius follows.

If $\boldsymbol{x} \in [0,1)^d \setminus C_Q^{(0)}$, then choose any $\widetilde{\boldsymbol{x}} \in C_Q^{(0)}$ such that $\|\boldsymbol{x}-\widetilde{\boldsymbol{x}}\|_{\infty} \le c_3/Q^{1/d}$. Such a $\widetilde{\boldsymbol{x}}$ certainly exists. For $\widetilde{\boldsymbol{x}}$ by the above reasoning we again find a suitable $\{m \boldsymbol{\alpha}\}$, and $\|\boldsymbol{x}-\{m \boldsymbol{\alpha}\}\|_\infty \le \|\boldsymbol{x}-\widetilde{\boldsymbol{x}}\|_\infty + \|\widetilde{\boldsymbol{x}}-\{m \boldsymbol{\alpha}\}\|_\infty$ also in this case leads to the desired bound for the covering radius of order $Q^{-1/d}$.

Now consider separation. Let $N \ge M \ge 1$. Then \eqref{nalpha_exists} implies
\begin{equation*}
\langle \{ N \boldsymbol{\alpha} \} - \{ M \boldsymbol{\alpha} \} \rangle = \langle (N - M) \boldsymbol{\alpha} \rangle \ge \frac{c}{(N- M)^{1/d}} \ge \frac{c}{N^{1/d}}.
\end{equation*}
Thus we obtain
\begin{equation*}
\| \{N \boldsymbol{\alpha}\} - \{M \boldsymbol{\alpha}\} \|_\infty \ge \frac{c}{N^{1/d}}.
\end{equation*}
Summing up, \eqref{nalpha_exists} implies quasi-uniformity.

In order to prove the converse direction, assume that \eqref{nalpha_exists} does not hold. Then for every $\varepsilon > 0$ there exists an $n_0$ with
\begin{equation*}
\langle n_0 \boldsymbol{\alpha} \rangle \le \frac{\varepsilon}{n_0^{1/d}}.
\end{equation*}
Consider two points $\boldsymbol{v} = (v_1, \dots, v_d) := \{n_0 \boldsymbol{\alpha} \} $ and $\boldsymbol{u} = (u_1, \dots, u_d) := \{2 n_0 \boldsymbol{\alpha} \}$. Then for any $j \in \{1,\ldots,d\}$
we have $u_j = 2v_j$ if $0 \le v_j < 1/2$ and $u_j = 2v_j-1$ otherwise,
and thus we have
\[
\langle u_j  - v_j \rangle = \langle v_j \rangle
\quad \text{and} \quad
|u_j  - v_j| =
\left\{ 
\begin{array}{ll}
|v_j| & \text{if $0 \le v_j < 1/2$}\\
|1-v_j| & \text{otherwise}
\end{array}
\right\} = \langle v_j \rangle.
\]
Thus we have
\begin{equation}\label{cond:pr:alpha}
\|\boldsymbol{u}-\boldsymbol{v}\|_\infty
= \langle \boldsymbol{v} \rangle
\le \frac{\varepsilon}{n_0^{1/d}}.    
\end{equation}
If the $(n \boldsymbol{\alpha})$-sequence is quasi-uniform, then there exists a $c_d>0$ such that
\[
\|\boldsymbol{u}-\boldsymbol{v}\|_\infty
= \| \{ 2 n_0 \boldsymbol{\alpha} \} - \{ n_0 \boldsymbol{\alpha} \} \|_\infty \ge \frac{c_d}{(2n_0)^{1/d}}.\]
However, in \eqref{cond:pr:alpha} the $\varepsilon$ can be chosen arbitrarily close to 0, and this yields a contradiction. Thus, the $(n \boldsymbol{\alpha})$-sequence cannot be quasi-uniform and the result follows.

\end{proof}

From \cite[Chapter 1; Proof of Theorem VIII]{C57} it follows that vectors $\boldsymbol{\alpha}$ exist which satisfy Condition~\eqref{nalpha_exists}. In particular, see \cite[Chapter 1; Eq. (9)]{C57}. Theorem~\ref{thm:nalpha} generalizes the result shown in \cite{G24b} for one-dimensional Kronecker sequences. Also there, it was shown that the sufficient and necessary condition for the sequence to be quasi-uniform is that the real number $\alpha$ is badly approximable. This condition is different from the condition for the sequence to be of low discrepancy. In the multi-dimensional case, \cite[Theorem~1]{L88} shows that the isotropic discrepancy $J_N(\Scal)$ of $(n \boldsymbol{\alpha})$-sequences $\Scal$ satisfying \eqref{nalpha_exists} is bounded by $J_N(\Scal) \ll N^{-1/d}$ for $d \in \mathbb{N}\setminus \{1\}$ and the same upper bound applies to the star-discrepancy of $\Scal$. In particular, Condition~\eqref{nalpha_exists} implies that the star-discrepancy $D_N^{\ast}$ of an $(n \boldsymbol{\alpha})$-sequence tends to 0 as $N$ goes to infinity and hence we obtain the following result:

\begin{corollary}\label{cor:qud:ud}
If an $(n \boldsymbol{\alpha})$-sequence is quasi-uniform, then it is uniformly distributed.    
\end{corollary}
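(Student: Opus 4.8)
The plan is to read the corollary off directly from Theorem~\ref{thm:nalpha} together with the discrepancy estimate already assembled in its proof. First I would invoke the hypothesis that the $(n\boldsymbol{\alpha})$-sequence is quasi-uniform and apply the necessity direction of Theorem~\ref{thm:nalpha} to deduce that $\boldsymbol{\alpha}$ satisfies the Diophantine condition \eqref{nalpha_exists}, i.e.\ there is a constant $c>0$ with $\langle n\boldsymbol{\alpha}\rangle \ge c\,n^{-1/d}$ for all $n\in\NN$. In this way quasi-uniformity is translated into a statement purely about the Diophantine type of $\boldsymbol{\alpha}$, which is the natural starting point since uniform distribution is itself governed by such arithmetic properties.

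Next I would convert this one-sided approximation condition into a quantitative discrepancy bound. Using the Khintchine transference principle (as in \cite[Chapter~V, Section~3]{C57}), condition \eqref{nalpha_exists} is equivalent to the extremality of the linear form $n_1\alpha_1+\cdots+n_d\alpha_d-n$, which is precisely the hypothesis required to apply \cite[Theorem~1]{L88}. That theorem yields a bound on the isotropic discrepancy of the sequence of order $N^{-1/d}$, and since $D_N^*(\Scal)\le J_N(\Scal)$ (see Section~\ref{subsec:ud1}), the star-discrepancy also satisfies $D_N^*(\Scal)\ll N^{-1/d}$. In particular $D_N^*(\Scal)\to 0$ as $N\to\infty$, and by the standard characterization that a sequence is uniformly distributed modulo~1 if and only if its star-discrepancy tends to zero (\cite[Theorem~2.15]{LePi14}), the sequence is uniformly distributed, which is exactly the claim.

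I do not expect any genuine obstacle here, as the substantive work has already been carried out in proving Theorem~\ref{thm:nalpha} and in the underlying isotropic-discrepancy estimate. The only conceptual point worth flagging is that quasi-uniformity is an intrinsically geometric property (controlling separation and covering radii in an $\ell_p$ metric), whereas uniform distribution is a counting/discrepancy property; the bridge between the two is the isotropic-discrepancy bound of \cite{L88}, which converts the badly-approximable condition \eqref{nalpha_exists} into a convergence rate for $D_N^*$. One should merely take care that the implied constants depend only on $d$ (and $c$), so that the conclusion $D_N^*\to 0$ is uniform in $N$; this is automatic from the cited results.
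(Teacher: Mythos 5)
Your proposal is correct and follows essentially the same route as the paper: quasi-uniformity yields condition \eqref{nalpha_exists} via Theorem~\ref{thm:nalpha}, which (through the transference/extremality argument) lets one apply \cite[Theorem~1]{L88} to bound the isotropic and hence the star-discrepancy by $N^{-1/d}$, so $D_N^*\to 0$ and the sequence is uniformly distributed. No gaps; this matches the paper's reasoning in the paragraph preceding Corollary~\ref{cor:qud:ud}.
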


The result of Corollary~\ref{cor:qud:ud} is a specific feature of $(n \boldsymbol{\alpha})$-sequences which must not necessarily hold for general sequences. A counterexample is the Sobol' sequence, which is uniformly distributed (its discrepancy is of order of magnitude $(\log N)^d/N$) but which is not quasi-uniform in dimension $2$, as shown in \cite{G24a}.

The bounds on the star-discrepancy mentioned above are not optimal. But we can provide explicit examples of $(n \boldsymbol{\alpha})$-sequences which are quasi-uniform and have at the same time a very low (in fact, almost best possible) star-discrepancy.

We will use the following discrepancy bound for Kronecker-sequences (see, e.g, \cite[Theorem~6.1]{N73}, \cite[Section~4.8]{DHP15} or \cite[Chapter~2, Section~3, Exercises~3.17]{KN74}): Let $\alpha_1,\ldots,\alpha_d$ be irrational numbers. If for all $\varepsilon>0$ there is a $c(\varepsilon)>0$ such that 
\begin{equation}\label{cond:alpha}
(\overline{n}_1 \cdots \overline{n}_d)^{1+\varepsilon} \langle n_1 \alpha_1+\cdots+n_d \alpha_d \rangle > c(\varepsilon) \quad \mbox{for all } (n_1,\ldots,n_d) \in \mathbb{Z}^d\setminus \{\boldsymbol{0}\},    
\end{equation}
where $\overline{n}:=\max(1,|n|)$, then for the discrepancy of the corresponding $(n \boldsymbol{\alpha})$-sequence $\mathcal{S}$ we have $$D_N^{\ast}(\mathcal{S})\ll \frac{1}{N^{1-\delta}} \quad \mbox{for all } \delta>0.$$

It was shown by Schmidt~\cite{S70} that \eqref{cond:alpha} holds whenever $\alpha_1,\ldots,\alpha_d$ are algebraic and such that $1,\alpha_1,\ldots,\alpha_d$ are linearly independent over $\mathbb{Q}$ (this generalizes the famous Thue-Siegel-Roth Theorem).

Let now $1,\alpha_1,\ldots,\alpha_d$ be linear independent elements (hence a basis) of an algebraic extension of $\mathbb{Q}$ of degree $d+1$. Then, according to \cite[Chapter~V, Theorem~3]{C57}, $\alpha_1,\ldots,\alpha_d$ also satisfy property \eqref{nalpha_exists}. Hence the corresponding $(n \boldsymbol{\alpha})$-sequence $\mathcal{S}$ in this case is quasi-uniform and its star-discrepancy satisfies $$D_N^{\ast}(\mathcal{S}) \ll \frac{1}{N^{1-\delta}} \quad \mbox{for all } \delta>0.$$ 

As a concrete example we can choose for instance 
\begin{equation*}
\boldsymbol{\alpha} = ( 2^{1/(d+1)}, 2^{2/(d+1)}, \ldots, 2^{d/(d+1)}) \in \mathbb{R}^d. 
\end{equation*}
Then the corresponding $(n \boldsymbol{\alpha})$-sequence $\mathcal{S}$ is quasi-uniform and its star-discrepancy is of order $N^{-1+\delta}$ for any $\delta > 0$.

\appendix
\section{Results and Proofs for quasi-uniformity}
\subsection{Radii bounds for the unit cube}\label{sec:unitcube_bound}

Here we give bounds on covering/separation radii via geometric interpretation.
Let us denote the closed $\ell_p$ ball of center $\bsx$ and radius $r$ by $B_{p}(\bsx,r)$.
Since the covering radius is the smallest value of $r$ such that the union of the balls covers the entire domain $[0,1]^d$, we have
\[ \bigcup_{\bsx\in P}B_p(\bsx,h_p(P))\supseteq [0,1]^d. \]
With $N$ being the number of points in $P$, it follows that
\begin{align*}        
1 = \mathrm{vol}([0,1]^d)&\leq \mathrm{vol}\left( \bigcup_{\bsx\in P}B_p(\bsx,h_p(P))\right)\\
&\leq \sum_{\bsx\in P}\mathrm{vol}\left( B_p(\bsx,h_p(P))\right)=N \frac{(2\Gamma(1+1/p))^d}{\Gamma(1+d/p)}(h_p(P))^d,
\end{align*}
where $\Gamma$ denotes the gamma function. 
Thus the covering radius is bounded below by
\begin{equation}
h_p(P)\ge \frac{1}{N^{1/d}}\ \frac{(\Gamma(1+d/p))^{1/d}}{2\Gamma(1+1/p)}.
\end{equation}

Similarly, it follows from the interpretation of the separation radius that a slightly enlarged domain $\left[-q_p(P),1+q_p(P) \right]^d$ must contain all of the $N$ open $\ell_p$ balls with radius $q_p(P)$, which do not intersect with each other. Hence, we have
\begin{align*}
\left(1+2q_p(P)\right)^d & = \mathrm{vol}\left( \left[-q_p(P),1+q_p(P) \right]^d\right) \geq \mathrm{vol}\left( \bigcup_{\bsx\in P}B_p(\bsx,q_p(P))\right)\\
&= \sum_{\bsx\in P}\mathrm{vol}\left( B_p(\bsx,q_p(P))\right)=N \frac{(2\Gamma(1+1/p))^d}{\Gamma(1+d/p)}(q_p(P))^d.
\end{align*} 
Thus the separation radius is bounded above by
\begin{equation}    
q_p(P)\le \frac{(\Gamma(1+d/p))^{1/d}}{2N^{1/d}\Gamma(1+1/p)-2(\Gamma(1+d/p))^{1/d}}.\end{equation}

\subsection{Proof of Lemma~\ref{lem_seq_weak}}\label{sec:app_proof1}
Since the mesh ratio $\rho_p(P_{i_k})$ is bounded, the covering radius and separation radius must both be of the optimal order. Hence there are constants $C_p, C'_p > 0$ such that
\begin{align*}
h_p(P_{i_k}) \le C_p i_k^{-1/d} \quad \mbox{and} \quad q_p(P_{i_k}) \ge C'_p i_k^{-1/d} \quad \mbox{for all } k \in \mathbb{N}.
\end{align*}

Let $m \in \mathbb{N}$ such that $i_{k} \le m < i_{k+1}$. Then $i_k \le m < c i_k$. Let $P_m = \{\bsx_0, \bsx_1, \ldots, \bsx_{m-1}\}$, then $P_{i_k} \subseteq P_m \subset P_{i_{k+1}}$. Thus the covering radius satisfies
\begin{equation*}
h_p(P_m) \le h_p(P_{i_k}) \le C i_{k}^{-1/d} \le C_p (m/c)^{-1/d}.
\end{equation*}
and the separation radius satisfies
\begin{equation*}
q_p(P_m) \ge q_p(P_{i_{k+1}}) \ge C'_p i_{k+1}^{-1/d} \ge C'_p (c i_{k})^{-1/d} \ge C'_p (c m)^{-1/d}.
\end{equation*}
Thus the mesh ratio of $P_m$ satisfies
\begin{equation*}
\rho_p(P_m) \le \frac{C_p}{C'_p} c^{2/d}.
\end{equation*}

Note that $\rho_p(P_{i_1}) \le C''_p$, hence the points $\bsx_0, \bsx_1, \ldots, \bsx_{i_1-1}$ are all distinct and so the separation radius of $P_1, P_2, \ldots, P_{i_1-1}$ is bounded away from $0$. Since the volume of the unit cube is finite, also the covering radius is finite. Hence the mesh ratios of $P_1, P_2, \ldots, P_{i_1-1}$ are all finite.

Thus for any $m \in \mathbb{N}$ we have
\begin{equation*}
\rho_p(P_m) \le \max \{C_p c^{2/d} / C'_p, \rho_p(P_1), \rho_p(P_2), \ldots, \rho_p(P_{i_1-1})\} < \infty.
\end{equation*}

Hence the mesh ratio is bounded independently of $m$ and therefore $\mathcal{S}$ is a quasi-uniform sequence. The remaining parts can be shown in a similar way.

\subsection{Proof of Lemma~\ref{lem_seq_weak_inv}}\label{sec:app_proof2}
Let $\Scal=\{x_0,x_1,\ldots\}$ be the van der Corput sequence in base $2$, which is quasi-uniform.
Take $k,\ell,u,v \in \NN$ such that
$2^{u-1} < k \le 2^u \le 2^v < \ell \le 2^{v+1}.$
Consider $\Scal'$, obtained from $\Scal$ by swapping the two points $x_{k}$ and $x_{2^v}$,
and let $P'_k$ 
be the first $k$ points from $\Scal'$.
Then $P_k = P'_k$ and $P_\ell = P'_\ell$, whereas
\[
q_p(P'_{k+1})
 = \frac{|x_0 - x_{2^v}|}{2}
 = 2^{-v-1},
 \qquad
h_p(P'_{k+1})
 \ge |1 - (2^u - 1)2^{-u}|
 = 2^{-u},
\]
and therefore
\[
\rho_p(P'_{k+1})
 = \frac{h_p(P'_{k+1})}{q_p(P'_{k+1})}
 \ge 2^{v-u+1}
 \ge \frac{\ell}{2k}.
\]

Thus, for any increasing indices $1 \le i_1 < i_2 < i_3 < \cdots$ with 
$\sup_{k\in\NN} i_{k+1}/i_k = \infty$,
repeating the above kind of swap on the van der Corput sequence $\Scal$ produces a sequence $\Scal'$ for which $\rho(P'_{i_k}) = \rho(P_{i_k})$ remains bounded, while
\[
\sup_{k \in \NN}\rho(P'_{i_k+1}) \ge \sup_{k \in \NN}\frac{i_{k+1}}{2i_k} = \infty.
\]
This shows that $\Scal'$ is not quasi-uniform.

\bibliographystyle{plain}
\bibliography{ref.bib}

\end{document}